\newtheorem{thm}{thm}[section]
\newtheorem{remark}[thm]{Remark}
\newcommand{\eps}{{\varepsilon}}
\newcommand{\D}{\mathrm{D}}
\renewcommand*{\Xi}{{\boldsymbol{\xi}}}
\newcommand*{\I}{{\mathbb{I}}}
\newcommand*{\R}{{\mathbb{R}}}
\newcommand{\LE}{{\mathcal{L}}_G}
\newcommand{\phieta}{ \varphi_\eta}
\newcommand{\phiyn}{ \varphi_{y^n}}
\newcommand{\phiYn}{ \varphi_{Y{(t^n)}}}
\newcommand{\dt}[0]{{\delta t}}
\newcommand{\Dt}[0]{{\Delta t}}
\newcommand{\td}[0]{t_\Delta}
\newcommand{\twoparts}[4]
{
	\left\{
		\begin{array}{lll}
			&\displaystyle#1 &\quad \textnormal{if } \displaystyle0<\dt\le\frac{2\eps}{\lambda+1} \vspace{6pt}\\ 
			&{\displaystyle#2 }& {\quad \textnormal{if } \displaystyle\frac{2\eps}{\lambda+1}<\dt<\frac{2\eps}{\lambda}}
		\end{array}
	\right.
}
\begin{document}

\title{On convergence of the projective integration method for stiff ordinary differential equations}

\author{John Maclean
\thanks {School of Mathematics and Statistics, University of Sydney, NSW 2006. Australia. Email: j.maclean@maths.usyd.edu.au.}
\and Georg A. Gottwald \thanks {School of Mathematics and Statistics, University of Sydney, NSW 2006. Australia. Email: georg.gottwald@sydney.edu.au.}}



\pagestyle{myheadings} \markboth{On convergence of the projective integration method}{John Maclean and Georg A. Gottwald}

\maketitle

\begin{abstract}
We present a convergence proof of the projective integration method for a class of deterministic  multi-dimensional multi-scale systems which are amenable to centre manifold theory. The error is shown to contain contributions associated with the numerical accuracy of the microsolver, the numerical accuracy of the macrosolver and the distance from the centre manifold caused by the combined effect of micro- and macrosolvers, respectively. We corroborate our results by numerical simulations. 
\end{abstract}

\begin{keywords}
\smallskip
multi-scale integrators, projective integration, heterogeneous multiscale methods
\end{keywords}


\section{Introduction}

Devising efficient computational methods to simulate high-dimensional complex systems is of paramount importance to a wide range of scientific fields, ranging from nanotechnology, biomolecular dynamics and material science to climate science. The dynamics of large complex systems is made complicated by their high dimensionality and by the possible existence of active entangled processes running on temporal scales varying by several orders of magnitude. To resolve all variables in a high-dimensional system and capture the whole range of temporal scales is impossible given current computing power. In many applications, however, the modeller is only interested in the dynamics of a few relevant slow macroscopic coarse-grained variables. How to extract from a dynamical system the relevant dynamics of the slow degrees of freedom while ensuring that the collective effect of the unresolved variables is implicitly represented is one of the most challenging problems in computational modelling. There are two separate scenarios when such a dimension reduction is possible: scale separation and weak coupling \cite{Givonetal04}. We restrict ourselves here to the big class of time scale separated systems, in particular to deterministic stiff dissipative systems.\\

\noindent
Recently two numerical methods to deal with multi-scale systems have received much attention, the \emph{projective integration method} (PI) within the equation-free framework \cite{GearKevrekidis03,KevrekidisGearEtAl03} and the \emph{heterogeneous multiscale method} (HMM) \cite{EEngquist03,EngquistTsai05}. These powerful methods have successfully been applied to a wide range of problems, including modelling of water in nanotubes, micelle formation, chemical kinetics and climate modelling \cite{Majdaetal01,EEtAl05,KevrekidisSamaey09}. The general strategy of both methods is the following: Provided 
there exist closed (but possibly unknown) equations for the slow variables, the simulation is split between a macrosolver 
employing a large integration time-step and a microsolver, in which the full high dimensional system is integrated for a short burst employing a small integration time step. The two methods differ in the way the information of the microsolver is utilized in the macrosolver to evolve the slow variables. In the equation-free approach this is done without any assumptions on the actual form of the reduced equations, using the microsolver to estimate the temporal derivative of the slow variable, which is then subsequently propagated over a large time step. The underlying assumption is that the fast variables quickly relax to their equilibrium value (conditioned on the slow variables), and that subsequently the dynamics evolves on a reduced slow manifold. In heterogeneous multiscale methods, on the other hand, information available (i.e. through perturbation theory such as for example centre manifold theory, averaging theorems or homogenisation \cite{Carr,Khasminsky66,Kurtz73,PavliotisStuart}) is used to determine the functional form of the reduced slow vector field; the microsolver is then used to estimate the coefficients of this slow vector field conditioned on the slow variables. For further details we refer the reader to \cite{KevrekidisSamaey09,KevrekidisGearEtAl03,EEtAl07,VandenEijnden07}.\\

\noindent
The question we address here is the convergence of the numerical PI approximation to the true solution of a full multi-scale system of stiff deterministic ordinary differential equations. There exists a body of analytical work on the convergence of HMM \cite{EEngquist03,E03,EngquistTsai05}, including formulations for stochastic differential equations \cite{ELiuVandenEijnden05,Liu10}. Convergence results for PI were obtained for specific deterministic systems and for on-the-fly local error estimates in numerical algorithms \cite{GearKevrekidis03,GearLee06}. Stochastic differential equations were treated in \cite{GivonEtAl06}. Here we provide global error estimates for PI for a subclass of systems amenable to centre-manifold theory.\\

\noindent
The paper is organized as follows. In Section~\ref{sec:model} we discuss the class of dynamical systems studied, and briefly summarize in Section~\ref{sec:methods} HMM and PI for these systems. In Section~\ref{sec:proof}, the main part of this work, we derive rigorous error bounds for those numerical multiscale methods. In Section~\ref{sec:numerics} these are numerically verified. We conclude with a summary in Section~\ref{sec:summary}.


\section{Model}
\label{sec:model}

We consider deterministic multiscale systems for slow variables $y_\eps\in\R^n$ and fast variables $x_\eps\in\R^m$ of the form
\begin{align}
\dot y_\eps &=g(x_\eps,y_\eps)\; ,
\label{baseslow}
\\
\dot x_\eps &= \frac{1}{\varepsilon}(- \Lambda \, x_\eps+ f(y_\eps))\; ,
\label{basefast}
\end{align}
with time scale separation parameter $0<\varepsilon \ll 1$. Without loss of generality we assume that $(x_\eps,y_\eps)=(0,0)$ is a fixed point.  We assume there is a coordinate system such that the matrix $\Lambda\in \R^{m\times m}$ is diagonal with diagonal entries $\lambda_{ii}>0$. We further assume that we allowed for a scaling of time such that $\min(\lambda_{ii}) =1 $ and define $\max(\lambda_{ii})=\lambda$. We assume that the vectorfield of the slow variable $g(x,y)$ is purely nonlinear with $\det \D g(0,0)\neq 0$, where $\D g$ denotes the Jacobian of $g(x,y)$, so that there exists a centre manifold $x=h_\eps(y)$ on which the slow dynamics evolves according to
\begin{align}
\dot Y = G(Y)\; ,
\label{e.CMT}
\end{align}
with $Y=y+{\mathcal{O}}(\eps)$. The reduced slow vectorfield is given by (see for example \cite{Carr})
\begin{align}
\label{e.G0}
G(y) &= g(h_\eps(y),y)\; .
\end{align}
Initial conditions close to the fixed point are exponentially quickly attracted towards the centre manifold along the stable manifold of the fast variables $x_\eps$. Near the centre manifold the dynamics slows down and is approximately determined by the dynamics of the slow variables (\ref{e.CMT}) only. Centre manifold theory assures that on times $T\sim{\mathcal{O}}(1)$ the dynamics of (\ref{baseslow}) is well approximated by the reduced slow system (\ref{e.CMT}) (see for example \cite{Carr}).\\

It is worthwhile to formulate centre manifold theory in the framework of averaging \cite{Givonetal04}, and view the effect of the fast variables on the slow variables through their induced empirical measure which is approximated by $\mu_y(dx) = \delta(x-h_\eps(y))dx$, conditioned on the slow variables. The dynamical system (\ref{baseslow})-(\ref{basefast}) can equivalently be described by its associated Liouville equation for the probability density function $\rho(x_\eps,y_\eps)$. After an initial transient, the solution of the Liouville equation is approximated by
\begin{align}
\rho(x,y) = \delta(x-h_\eps(y))\hat\rho(y) + \mathcal{O}(\varepsilon)\; .
\end{align}
We can now reformulate (\ref{e.G0}) as
\begin{align}
\label{e.G}
G(y) &= \int g(x,y) \mu_y(dx)\nonumber\\
&= g(h_\eps(y),y)
\; .
\end{align}
The underlying assumption is that the measure $\mu_y(dx) = \delta(x-h_\eps(y))dx$ is the physically observable measure on the $y$-fibre;  that is, Lebesgue almost all initial conditions of the fast variables will evolve to generate $\mu_y(dx)$, if sufficiently close to the centre manifold. Centre manifold theory \cite{Carr} makes this approach rigorous and $\hat \rho(y)$ is the invariant density of the reduced system (\ref{e.CMT}). This is exploited explicitly in HMM and implicitly in PI. In the following we will approximate the centre manifold $h_\eps(y)$ by $h_\eps(y)\approx  \Lambda^{-1} f(y)+{\mathcal{O}}(\eps)$.


\section{Numerical Multiscale Methods}
\label{sec:methods}

We consider here two numerical methods to deal with the deterministic multi-scale system (\ref{baseslow})-(\ref{basefast}), namely heterogeneous multiscale methods and projective integration. 
We use the formulation of PI as in \cite{GearKevrekidis03,KevrekidisGearEtAl03} and of heterogeneous multiscale methods as in \cite{EEngquist03,EngquistTsai05}. We will see that both methods can be formulated in the same framework.\\

\noindent
We denote by $x^n$ and $y^n$ the numerical approximations to the solutions of (\ref{baseslow})-(\ref{basefast}) evaluated at the discrete time $t^n$, $x_\eps(t^n)$ and $y_\eps(t^n)$ respectively. Further we denote by $Y(t^n)$ the time-continuous solution of the reduced ordinary differential equation (\ref{e.CMT}) evaluated at time $t^n$. Throughout the paper superscripts denote discrete variables whereas brackets are reserved for continuous variables. We will employ a forward Euler method for both the micro- as well as the macrosolver.\\


\subsection{Heterogeneous Multiscale Methods}

The microsolver consists of $M$ microsteps with micro-time step $\dt$ \cite{EEngquist03}. The slow variables are held fixed during the microsteps, assuming infinite time scale separation. Let us denote by $x^{n,m}$ the $m${\rm th} microstep of the fast variables which was initialized at the $n${\rm th} macrostep at $t^n=n\Dt$ with $x^{n,0} = x^{n-1,M}=x^n$ and $y^{n,0} = y^n$. The microstep for a forward Euler scheme then is
\begin{align*}
x^{n,m+1} = x^{n,m} - \frac{\dt}{\varepsilon} \left( \Lambda  x^{n,m}-f(y^n) \right)\; ,
\end{align*}
for $m=0,\dots,M$. 
Invoking the Birkhoff ergodic theorem, the time series of the fast variables $x^{n,m}$ is used to estimate the average in the reduced slow vectorfield (\ref{e.G}), and the macrosolver is initialized at the previous macrostep $y^n$ and is written for a forward Euler step as
\begin{align}
y^{n+1} = y^n + \Dt \, \hat{g}(x^n,y^n)
\label{e.HMM0}
\end{align}
with the time-discrete approximation of the reduced slow vectorfield (\ref{e.G0})
\begin{align*}
G(y_\varepsilon(t^n))\approx \hat{g}(x^n,y^n) = \sum_{m=0}^M W_m g(x^{n,m},y^n) 
\end{align*}
for some weights $W_m$. This is justified as long as the slow variable remains constant while integrating over the fast fibres. 
Being associated with the empirical approximation of the invariant density induced by the fast dynamics (cf. (\ref{e.G})), the weights $W_m$ satisfy the normalization constraint
\begin{align}
\sum_{m=0}^M W_m = 1\; .
\label{e.Wnormal}
\end{align}
The weights should be chosen to best approximate the invariant density of the fast variable. A natural choice for our system (\ref{baseslow})-(\ref{basefast}), where the fast dynamics rapidly settles on the centre manifold, and does not advance significantly on the centre manifold in $M$ microsteps is to set
\begin{align} 
W_m = \left\{ \begin{gathered}
0 \;\;\;\text{ for } m<M \\
1\;\;\;\text{ for } m=M
\end{gathered} \right.
\; .
\label{W_HMM0}
\end{align}
The convergence of this scheme has been analyzed in \cite{E03}. More general weights were discussed in \cite{EngquistTsai05}.\\ 

\noindent
It is pertinent to mention that for this formulation of HMM the fast variables have to be initialized before each application of the microsolver; see for example \cite{EEtAl07} for a discussion on initializations. In PI, as described in the next subsection, the fast variables are initialized on the fly.\\ 

\noindent
The {\emph{seamless}} heterogeneous multiscale method \cite{E03,EngquistTsai05} can be applied to the case when the slow and fast variables are not explicitly known. The seamless HMM formulation for our system (\ref{baseslow})-(\ref{basefast}) advances both, fast and slow, variables simultaneously first through $M$ microsteps, and then subsequently through one macrostep. We adopt here the formulation described in \cite{E03} for deterministic systems. Introducing $y^{n,m}$ analogously to $x^{n,m}$ the $M$ microsteps are executed as
 \begin{align}
\label{micro_x} x^{n,m+1} &= x^{n,m} - \frac{ \dt}{ \varepsilon}( \Lambda \, x^{n,m} - f(y^{n,m}) ) \\
\label{micro_y} y^{n,m+1} &= y^{n,m} + \dt \, g(x^{n,m},y^{n,m}) \; . 
\end{align}
The macrostep is initialized at $(x^n,y^n)=(x^{n,0},y^{n,0})$ and takes the form
\begin{align}
\label{Smacro_x}
x^{n+1} &= x^n - \frac{\Dt}{\varepsilon} \sum_{m=0}^M W_m \left( \Lambda \, x^{n,m} - f(y^{n,m}) \right) \; , \\
y^{n+1} &= y^n + \Dt \, \tilde{g}(x^n,y^n)
\; ,
\end{align}
with the time-discrete approximation of the slow vectorfield 
\begin{align}
\label{define_g_twiddle}
\tilde{g}(x^n,y^n) := \sum_{m=0}^M W_m g(x^{n,m},y^{n,m}) \; .
\end{align}
Again, the weights $W_m$ need to satisfy (\ref{e.Wnormal}) (see \cite{EngquistTsai05} for choices of weights $W_m$).


\subsection{Projective Integration}
We present a formulation of projective integration which is close to the seamless formulation of HMM. 
PI advances both slow and fast variables at the same time, as done in seamless HMM. However, the PI scheme proposed in \cite{GearKevrekidis03} does not start the macrostep at $(x^{n,0},y^{n,0})$ as done in our adopted formulation of seamless HMM, but at $(x^{n,M},y^{n,M})$, allowing for nontrivial dynamics of the slow variables during the $M$ microsteps. This takes into account finite time scale separation $\varepsilon$, ignored in HMM. The microsteps of PI are exactly as in seamless HMM, (\ref{micro_x})--(\ref{micro_y}). The macrostep of PI is given by estimating the temporal time derivative of the slow (and fast) variables using the microsolver according to 
\begin{align*}
x^{n+1} &= x^{n,M} + \Dt \, \frac{x^{n,M} - x^{n,M-1}}{\dt}\\
y^{n+1} &= y^{n,M} + \Dt \, \frac{y^{n,M} - y^{n,M-1}}{\dt}
\; ,
\end{align*}
which becomes upon using the Euler step of the microsolver (\ref{micro_x})--(\ref{micro_y})
\begin{align}
\label{PImacro_x} x^{n+1} &= x^{n,M} - \frac{ \Dt}{ \varepsilon}( \Lambda \, x^{n,M} - f(y^{n,M}) )\\
\label{PImacro_y} y^{n+1} &= y^{n,M} + \Dt \, g(x^{n,M},y^{n,M})
\; .
\end{align}
Here the time between two macrosteps is $\td=\Dt+M\dt$ and we have $t^n = n\td$.

\noindent
Upon substituting the microsteps (\ref{micro_x})--(\ref{micro_y}) into the macrosolver (\ref{PImacro_x})--(\ref{PImacro_y}) we reformulate PI, such that it formally resembles seamless HMM, as
 \begin{align}
 x^{n+1} 
&= x^n - \frac{\td}{\varepsilon} \sum_{m=0}^M W_m \left( \Lambda \, x^{n,m} - f(y^{n,m}) \right)\; ,
\label{e.PIHMM}
\\
y^{n+1} 
 &= y^n + \td \, \tilde{g}(x^n,y^n) \; ,
 \label{e.PIHMMy}
\end{align}
with the weights $W_m$ now defined as
\begin{align}
W_m =\left\{ \begin{gathered}
\frac{\dt}{\td} \;\;\;\text{ for } m<M
\\
\frac{\Dt}{\td} \;\;\;\text{ for } m=M
\end{gathered} \right. \;.
\label{e.WPI}
\end{align}
Hence PI with a microstep of $\dt$ and macrostep of $\Dt$ is equivalent to seamless HMM with a microstep of $\dt$, a macrostep of $\td$ and a particular choice of weights $W_m$.


\section{Error analysis for Projective Integration}
\label{sec:proof}

We will now provide rigorous error bounds for PI in the formulation (\ref{e.PIHMM})--(\ref{e.WPI}). We follow the general line of proof used in \cite{E03} for error bounds of HMM with the choice of weights (\ref{W_HMM0}).\\

\noindent
Throughout this work we assume the following conditions on the global growth and smoothness of solutions of our system and on the numerical discretization parameters of PI.\\

%

\noindent
{\sc Assumptions}
\vspace{-4mm}
\begin{itemize}
\renewcommand{\labelitemi}{}
\item 
\renewcommand{\labelitemi}{\it A1:}
 \item The vectorfield $f(y)$ is Lipschitz continuous; that is there exists a constant~$L_f$ such that 
\begin{align*}
|f(y_1)-f(y_2)| \leq L_f|y_1 - y_2|\; .
\end{align*}
\renewcommand{\labelitemi}{\it A2:}
\item The vectorfield $g(x,y)$ is Lipschitz continuous; that is there exists a constant~$L_g$ such that 
\begin{align*}
|g(x_1,y_1)-g(x_2,y_2)| \leq L_g(|x_1-x_2|+|y_1-y_2|)\; .
\end{align*}
\renewcommand{\labelitemi}{\it A3:}
\item The vectorfield $f(y)$ is bounded for all $y$; that is there exists a constant~$C_f$ such that 
\begin{align*}
C_f =  \sup|f(y)| \;.
\end{align*}
\renewcommand{\labelitemi}{\it A4:}
\item The vectorfield $g(x,y)$ is bounded for all $x,y$; that is there exists a constant~$C_g$ such that 
\begin{align*}
C_g =  \sup|g(x,y)| \;.
\end{align*}
\renewcommand{\labelitemi}{\it A5:}
\item The reduced vectorfield $G(Y)$ of the slow dynamics is of class $C^1$; that is there exists a constant~$C^*$ such that 
\begin{align*}
C^* = \sup|\ddot{Y}(t)| \;.
\end{align*}
\renewcommand{\labelitemi}{\it A6:}
\item The microstep size $\dt$ resolves the fastest of the fast variables, while the macrostep size~$\Dt$ resolves the dynamics of the slow system, so that
\begin{align*}
0 < \dt \le \frac{2\eps}{\lambda}  < \Dt \; .
\end{align*}
\renewcommand{\labelitemi}{\it A7:}
\item The total time $M\dt$ of the microsteps is sufficiently short so that
\begin{align*}
 L_G M \dt \le L_g (1 + L_f) M \dt   \leq 1\;,
\end{align*}
where $L_G \leq L_g(1+L_f)$ is the Lipschitz constant of the reduced dynamics \eqref{e.CMT}.\\
\renewcommand{\labelitemi}{\it A8:}
\item The macrostep size $\Dt$, number of microsteps $M$ and  microstep size $\dt$ are chosen such that 
\begin{equation*}
                   \begin{array}{lll}
			&\displaystyle\Dt  \, \exp\left(-\frac{M\dt}{\eps}\right) < \frac{\eps}{\lambda}  &\qquad\text{if \;\;} \displaystyle0<\dt\le\frac{2\eps}{\lambda+1} \;,\\ 
			&&or \\
		&	\displaystyle \Dt  \, \exp\left(-\frac{M\dt^\star}{\eps}\right) < \frac{\eps}{\lambda} & \qquad \text{if \;\;} \displaystyle\frac{2\eps}{\lambda+1}<\dt<\frac{2\eps}{\lambda}\; .
		\end{array}
\end{equation*}
with $\dt^\star=2\varepsilon-\lambda \dt$. The range $2\eps/(\lambda+1) < \dt<2\eps/\lambda$ corresponds to $0<\dt^\star<2\varepsilon/(\lambda+1)$. This assumption is necessary to bound the distance of the fast variables from the centre manifold over the macrosteps.\\
\end{itemize}

\noindent
The global Lipschitz conditions can be relaxed to local Lipschitz conditions by the usual means.\\

\noindent
We will establish bounds for the error $\mathcal{E}^n$ between the PI estimate $y^n$ and the solution of the full system $y_\eps(t^n)$
\begin{align*}
{\mathcal{E}}^n = |y_\eps(t^n)-y^n|\; .
\end{align*}
Our main result is provided by the following Theorem:
\medskip
\begin{theorem}
\label{theorem.main}
Given the assumptions ({\it A1})--({\it A8}), there exists a constant $C$ such that  on a fixed time interval $T$, for each $n$ such that $n\td\le T$, the error between the PI estimate and the exact solution of the full multiscale system (\ref{baseslow})-(\ref{basefast}) is given by
\begin{align*}
{\mathcal{E}}^n \le  
 \twoparts
{\quad \;\;\;\;C\left(
\td + \varepsilon
+\left(\frac{\varepsilon}{\td}+\eps+e^{-\frac{M\dt}{\varepsilon}} \right)|d^n| 
\right)}
{C\frac{\dt}{\dt^*}\left(
\td + \varepsilon
+\left(\frac{\varepsilon}{\td}+\eps+e^{-\frac{M\dt^*}{\varepsilon}} \right)|d^n| 
\right)}\;\; ,
\end{align*}
where $d^n$ measures the maximal distance of the fast variables $x$ from the approximate centre manifold $x=\bar{f}(y):= \Lambda^{-1} f(y)$ over all macrosteps and is estimated by
\begin{align*}
\left|d^n\right| \leq&
\twoparts
{\left| x^0 - \bar{f}(y^0) \right|  + \frac{\eps L_f C_g(1 + \lambda)  \Dt }{\eps-\Dt\lambda e^{-\frac{M\dt}{\eps}}}}
{\left| x^0 - \bar{f}(y^0) \right|  + \frac{\eps L_f C_g(1 + \frac{\dt}{\dt^*}\lambda)  \Dt }{\eps-\Dt\lambda e^{-\frac{M\dt^*}{\eps}}}}
 \;\;,
\end{align*}
with $\dt^* = 2\eps - \lambda \dt$.
\end{theorem}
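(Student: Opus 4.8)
The plan is to bound $\mathcal{E}^n$ by the triangle inequality $\mathcal{E}^n \le |y_\eps(t^n)-Y(t^n)| + |Y(t^n)-y^n|$, where $Y$ solves the reduced equation \eqref{e.CMT}. The first term is $\mathcal{O}(\eps)$ by centre manifold theory (recall $Y=y+\mathcal{O}(\eps)$ on $\mathcal{O}(1)$ times), which already supplies the bare $\eps$ contribution. The real work is to control the numerical error $e^n:=|Y(t^n)-y^n|$ of the PI scheme against the reduced flow, and this in turn forces us first to quantify how far the fast variables stay from the approximate centre manifold $x=\bar f(y)=\Lambda^{-1}f(y)$, i.e.\ to establish the bound on $d^n$.

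For $d^n$ I would track the difference $x^{n,m}-\bar f(y^{n,m})$. Using $\Lambda x - f(y)=\Lambda(x-\bar f(y))$, the microstep \eqref{micro_x} acts on this difference by the diagonal amplification $I-(\dt/\eps)\Lambda$ and the macrostep \eqref{PImacro_x} by $I-(\Dt/\eps)\Lambda$. The key elementary computation is $\max_i|1-(\dt/\eps)\lambda_{ii}|$ over the spectrum $\lambda_{ii}\in[1,\lambda]$: since the explicit-Euler factor $1-(\dt/\eps)\lambda_{ii}$ changes sign near $\dt\sim 2\eps/\lambda$, for $0<\dt\le 2\eps/(\lambda+1)$ the slowest eigenvalue $\lambda_{ii}=1$ realizes the slowest contraction, giving $(1-\dt/\eps)^M\le e^{-M\dt/\eps}$, whereas for $2\eps/(\lambda+1)<\dt<2\eps/\lambda$ the fastest eigenvalue $\lambda_{ii}=\lambda$ dominates and, with $\dt^*=2\eps-\lambda\dt$, one gets $(1-\dt^*/\eps)^M\le e^{-M\dt^*/\eps}$. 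This dichotomy is precisely the source of the two cases in the statement. Because $\Dt>2\eps/\lambda$, the macrostep \emph{expands} the distance by a factor $\sim\Dt\lambda/\eps$, while the moving manifold (through $y^{n,m+1}-y^{n,m}$ during microsteps and $y^{n+1}-y^{n,M}$ across the macrostep) contributes source terms bounded, via A2, A4 and $\|\Lambda^{-1}\|\le 1$, by multiples of $L_f C_g\dt$ and $L_f C_g\Dt$. Assembling these yields a linear recursion $|d^{n+1}|\le\alpha|d^n|+\beta$ with $\alpha=(\Dt\lambda/\eps)e^{-M\dt/\eps}$ and $\beta=L_f C_g(1+\lambda)\Dt$; Assumption A8 guarantees $\alpha<1$, so that $|d^n|\le|x^0-\bar f(y^0)|+\beta/(1-\alpha)$, which is exactly the stated bound (with its regime-2 analogue obtained by replacing $\dt$ by $\dt^*$).

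With $d^n$ under control I would carry out the local error analysis. Writing the reformulated PI update \eqref{e.PIHMMy} as $y^{n+1}=y^n+\td\,\tilde g(x^n,y^n)$ and Taylor-expanding the reduced flow as $Y(t^{n+1})=Y(t^n)+\td\,G(Y(t^n))+\mathcal{O}(\td^2)$ (using A5), I split $|G(Y(t^n))-\tilde g(x^n,y^n)|\le L_G e^n+|G(y^n)-\tilde g(x^n,y^n)|$. Inserting $G(y^n)=g(\bar f(y^n),y^n)+\mathcal{O}(\eps)$, using the normalisation $\sum_m W_m=1$ \eqref{e.Wnormal} and A2, the residual is bounded by $\sum_m W_m L_g\bigl(|x^{n,m}-\bar f(y^n)|+|y^{n,m}-y^n|\bigr)+\mathcal{O}(\eps)$. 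Here the $m=M$ term (weight $\Dt/\td\approx 1$) contributes $e^{-M\dt/\eps}|d^n|$, the geometric sum over the early microsteps (each of weight $\dt/\td$) contributes $\tfrac{\dt}{\td}\cdot\tfrac{\eps}{\dt}|d^n|=\tfrac{\eps}{\td}|d^n|$, and the residual $\eps|d^n|$ piece comes from the correction $h_\eps=\bar f+\mathcal{O}(\eps)$; the slow drift $|y^{n,m}-y^n|\le C_g M\dt\le C_g\td$ is absorbed into the $\td$ term. This reproduces the coefficient $(\eps/\td+\eps+e^{-M\dt/\eps})$ of $|d^n|$.

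Finally I would close with a discrete Gronwall argument: the recursion $e^{n+1}\le(1+\td L_G)e^n+\td R^n+C\td^2$, with $R^n$ the residual just estimated, summed over the $n\le T/\td$ macrosteps on the fixed interval $T$, gives $e^n\le C(\td+R^n)$ (the factor $e^{L_G T}$ being absorbed into $C$), and adding the $\mathcal{O}(\eps)$ centre manifold term delivers the theorem. I expect the main obstacle to be the $d^n$ bound, and within it the regime split for the microstep contraction factor: because the Euler amplification $1-(\dt/\eps)\lambda_{ii}$ flips sign, one must correctly identify which eigenvalue gives the slowest contraction in each range of $\dt$ and then balance this against the macrostep expansion $\Dt\lambda/\eps$ so that Assumption A8 yields a genuine contraction $\alpha<1$; the remainder is routine Lipschitz bookkeeping and the Gronwall accumulation.
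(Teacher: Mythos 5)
Your proposal is correct, and it shares the paper's overall architecture: the split $\mathcal{E}^n \le |y_\eps(t^n)-Y(t^n)| + |Y(t^n)-y^n|$ with the reduction error bounded by $C\eps$ (Theorem~\ref{theorem.Ec}); the two-regime microstep contraction $\left|\I-\frac{\dt}{\eps}\Lambda\right| = \max\{1-\frac{\dt}{\eps},\,\lambda\frac{\dt}{\eps}-1\}$ with $\dt^*=2\eps-\lambda\dt$ (Lemma~\ref{lemma_dnm}); the macrostep recursion $|d^{i,0}|\le \frac{\Dt\lambda}{\eps}e^{-\frac{M\dt}{\eps}}|d^{i-1,0}|+L_fC_g(1+\lambda)\Dt$ closed into a geometric series by ({\it A8}) (Lemma~\ref{lemma5}); and a final discrete Gronwall accumulation. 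Where you genuinely diverge is the discretization error: the paper interposes the Euler-discretized reduced flow $\phieta^m$ and the auxiliary field $\tilde{G}(\eta)=\sum_m W_m G(\phieta^m)$, requiring three lemmas (Lemmas~\ref{phi.bound}, \ref{g-G}, \ref{lemma_Gtwiddle}) and a mean-value-theorem recursion in $\D\tilde{G}$, whereas you compare $\tilde{g}(x^n,y^n)$ directly to $G(y^n)$ via the normalisation $\sum_m W_m=1$, bounding the residual by $\sum_m W_m L_g\bigl(|x^{n,m}-\bar{f}(y^n)|+|y^{n,m}-y^n|\bigr)$ and absorbing the slow drift $|y^{n,m}-y^n|\le C_g m\dt \le C_g\td$ into the $\mathcal{O}(\td)$ local truncation error. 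This bypasses Lemmas~\ref{phi.bound} and~\ref{lemma_Gtwiddle} entirely and in fact produces the coefficient $\frac{\eps}{\td}+e^{-\frac{M\dt}{\eps}}$ of $|d^n|$ \emph{without} the extra $\eps|d^n|$ term, so your bound implies the stated one a fortiori; the price is a cruder treatment of the microstep drift, which the paper tracks more finely through $\phieta^m$. Two minor points: first, your attribution of the $\eps|d^n|$ piece to the correction $h_\eps=\bar{f}+\mathcal{O}(\eps)$ is not how it arises in the paper --- that correction contributes only $\mathcal{O}(\eps)$ (and sits inside the reduction error); in the paper $\eps|d^n|$ comes from Lemma~\ref{phi.bound}, the deflection of $y^{n,m}$ from the reduced Euler trajectory caused by the off-manifold fast variable --- but since your route never needs this term, the misattribution is harmless. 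Second, in the regime $\frac{2\eps}{\lambda+1}<\dt<\frac{2\eps}{\lambda}$ the shorthand of ``replacing $\dt$ by $\dt^*$'' applies only to the contraction factor: the per-microstep source remains $L_fC_g\dt$, and it is the geometric sum with ratio $1-\frac{\dt^*}{\eps}$ that generates the $\frac{\dt}{\dt^*}$ amplification appearing in $\bigl(1+\lambda\frac{\dt}{\dt^*}\bigr)$ and in the overall prefactor $C\frac{\dt}{\dt^*}$; your own evaluation of the weighted geometric series in regime one shows you would recover this correctly when fleshing out the details.
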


\medskip
Before proceeding to the proof of the theorem, it is worthwhile to interpret the bounds on $\mathcal{E}^n$. The term proportional to $\td=\Dt + M \dt$ reflects the first order convergence of the forward Euler numerical scheme used to propagate the micro- and macrosolver respectively. The terms proportional to the time scale parameter $\varepsilon$ represent the error made by the reduction as well as an additional error incurred during the drift of the slow variable over the microsteps. The term proportional to $\exp(-M\dt/\varepsilon)|d^n|$ measures the exponential decay of the fast variables towards the slow manifold.


\subsection{Error Analysis}
We split the error into two parts according to
\begin{align*}
{\mathcal{E}}^n &= |y_\eps(t^n)-y^n|\\
&\le |y_\eps(t^n)-Y(t^n)| + |y^n-Y(t^n)|
\; ,
\end{align*}
where the first term describes the error between the exact solution of the full system (\ref{baseslow})-(\ref{basefast}) and the reduced slow system (\ref{e.CMT}), which we label {\emph{reduction error}}, and the second term the error between PI and the exact solution of the reduced slow system (\ref{e.CMT}), which we denote by {\emph{discretization error}}. We will bound the two terms separately in the following. 


\subsection{Reduction error}
Defining the error between the exact solution of the full system (\ref{baseslow})-(\ref{basefast}) and the reduced slow system (\ref{e.CMT})
\begin{align}
|E_c^n| = |y_\eps(t^n)-Y(t^n)|\; ,
\end{align}
and setting the initial conditions close to the slow manifold with $y_\eps(0)=Y(0)+c_{0,y}\,\eps$ and $x_\eps(0)=f(y_\eps(0))+c_{0,x}$, we can formulate the following theorem:
\medskip
\begin{theorem}
\label{theorem.Ec}
Given the assumptions ({\it A1})--({\it A3}), there exists a constant $C_1$ such that on a fixed time interval $T$, for each $t^n\le T$, the error between exact solutions of the reduced and the full system is bounded by
\begin{align*}
|E_c^n| \le C_1 \varepsilon \; ,
\end{align*}
with 
\[
C_1=
{\rm{max}}\left(|y_\eps(0)-Y(0)|,L_fL_gC_gt^n,L_g|d_\eps(0)|) \right) 
e^{L_g\left(1+L_f\right)t^n}
\; ,
\]
where $d_\eps=x_\eps-f(y_\eps)$ measures the distance of the fast variables from the approximate slow manifold.
\end{theorem}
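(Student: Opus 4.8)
The plan is to obtain $|E_c^n|$ from a Gronwall estimate on the continuous-time reduction error $E_c(t)=y_\eps(t)-Y(t)$, after first establishing that the fast variable $x_\eps$ is slaved to the approximate centre manifold $\bar f(y):=\Lambda^{-1}f(y)$ up to an exponentially small transient and an $\O(\eps)$ residual. Since Theorem~\ref{theorem.Ec} compares two exact flows, the whole argument is carried out at the level of the ODEs (\ref{baseslow})--(\ref{basefast}) and (\ref{e.CMT}), with no reference to the numerical iterates.

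The first step controls the distance $d_\eps(t)=x_\eps(t)-\bar f(y_\eps(t))$ of the fast variable from the approximate centre manifold. Differentiating and using (\ref{baseslow})--(\ref{basefast}), the linear part collapses to $-\tfrac{1}{\eps}\Lambda d_\eps$ and one is left with
\[
\dot d_\eps = -\tfrac{1}{\eps}\Lambda\,d_\eps - \Lambda^{-1}\D f(y_\eps)\,g(x_\eps,y_\eps).
\]
Because $\Lambda$ is diagonal with $\min\lambda_{ii}=1$, one has $\|e^{-\Lambda t/\eps}\|\le e^{-t/\eps}$ and $\|\Lambda^{-1}\|\le1$, while the forcing is bounded by $L_fC_g$ through A1 and the boundedness of $g$. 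Variation of constants together with $\int_0^t e^{-(t-s)/\eps}\,\d s\le\eps$ then gives the key slaving estimate
\[
|d_\eps(t)|\le e^{-t/\eps}\,|d_\eps(0)| + L_fC_g\,\eps.
\]

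The second step sets up the error equation $\dot E_c = g(x_\eps,y_\eps)-g(h_\eps(Y),Y)$. Adding and subtracting $g(\bar f(y_\eps),y_\eps)$ and $g(\bar f(Y),Y)$ and invoking the Lipschitz bound A2 splits the right-hand side into a term $\le L_g|d_\eps|$, a term $\le L_g(1+L_f)|E_c|$ (using A1 and $\|\Lambda^{-1}\|\le1$), and the centre-manifold approximation error $L_g\,|h_\eps-\bar f|=\O(\eps)$. Substituting the slaving estimate yields
\[
|\dot E_c|\le L_g(1+L_f)\,|E_c| + L_g\,e^{-t/\eps}\,|d_\eps(0)| + C\eps.
\]
Integrating from $0$ and applying Gronwall's inequality produces the factor $e^{L_g(1+L_f)t^n}$; the transient integrates to $L_g\eps|d_\eps(0)|$, the constant forcing to $\O(\eps t^n)$ with leading constant $L_fL_gC_g$, and the initial error $|E_c(0)|=|y_\eps(0)-Y(0)|$ is itself $\O(\eps)$ by the choice of initial data. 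Collecting these three $\O(\eps)$ contributions into a single maximum and factoring out $\eps$ reproduces the stated constant $C_1$.

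The main obstacle I anticipate is quantitative rather than structural: one must verify that the decaying transient $e^{-t/\eps}|d_\eps(0)|$, which is $\O(1)$ pointwise near $t=0$, contributes only at order $\eps$ once integrated against the Gronwall kernel. This is precisely where the stiffness is used constructively — the $\O(1/\eps)$ damping of the fast fibre forces $\int_0^t e^{-s/\eps}\,\d s\le\eps$ — and it hinges on bounding the decay rate from below by $\min\lambda_{ii}=1$. A secondary point is showing that the gap $|h_\eps(y)-\bar f(y)|$ between the true centre manifold and its approximation $\Lambda^{-1}f(y)$ is $\O(\eps)$ uniformly in the relevant region, so that it can be absorbed into the $C\eps$ forcing.
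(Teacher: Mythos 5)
Your proposal is correct and follows essentially the same route as the paper's proof: the identical slaving estimate $|d_\varepsilon(t)|\le e^{-t/\varepsilon}|d_\varepsilon(0)|+\varepsilon L_f C_g$ obtained by differentiating $d_\varepsilon=x_\varepsilon-\bar{f}(y_\varepsilon)$ and applying variation of constants, followed by the same Lipschitz decomposition and Gronwall argument producing the factor $e^{L_g(1+L_f)t^n}$ and the three $\mathcal{O}(\varepsilon)$ contributions. The only (harmless) difference is that you carry the gap $|h_\varepsilon-\bar{f}|=\mathcal{O}(\varepsilon)$ as an explicit extra forcing term, whereas the paper works directly with the approximate manifold $\bar{f}$ in the reduced vector field and absorbs that error in the model reduction of Section~2.
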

\begin{proof}
The proof is standard in centre manifold theory and is included for completeness. Using assumptions ({\it A1})--({\it A2}) on the Lipschitz continuity of $f$ and $g$ we estimate
\begin{align}
\frac{d}{dt}|y_\eps(t)-Y(t)| 
&= |g(x_\eps,y_\eps)-g(\bar{f}(Y),Y)|
\nonumber
\\
&\le |g(x_\eps,y_\eps)-g(x_\eps,Y)|+|g(x_\eps,Y)-g(\bar{f}(Y),Y)|
\nonumber
\\
&\le L_g|y_\eps-Y|+L_g|x_\eps-\bar{f}(Y)|
\nonumber
\\
&\le L_g |y_\eps-Y| + L_g |x_\eps-\bar{f}(y_\eps)| + L_g |\bar{f}(y_\eps)-\bar{f}(Y)|
\nonumber
\\
&\le L_g\left(1+L_f\right) |y_\eps-Y| + L_g |x_\eps-\bar{f}(y_\eps)|\; ,
\label{e.yrederr}
\end{align}
To estimate the second term, we differentiate the distance of the fast variable to the slow manifold $d_\eps=x_\eps-\bar{f}(y_\eps)$ with respect to time, using (\ref{basefast}), as
\begin{align*}
\dot{d}_\eps = -\frac{\Lambda}{\eps}d_\eps - D\bar{f}(y_\eps)\,g(x_\eps,y_\eps)\; ,
\end{align*}
where $\D \bar{f}$ denotes the Jacobian matrix of $\bar{f}$. Integrating and using the boundedness assumptions ({\it A4}) on $g$ and the Lipschitz continuity assumption ({\it A1}) on $f$, we readily estimate  
\begin{align}
|d_\eps(t)| &\le \exp{(-\frac{\Lambda}{\eps}t)}\left|d_\eps(0)\right| + \left|\int_0^t \exp{(-\frac{\Lambda}{\eps}(t-s))}D\bar{f}(y_\eps(s))\,g(x_\eps(s),y_\eps(s))\, ds\right|
\nonumber
\\
&\le e^{-\frac{t}{\eps}}|d_\eps(0)| + \sup_{|g| \le C_g}|Df\,g| \int_0^t e^{-\frac{t-s}{\eps}}\, ds\, 
\nonumber
\\
& \le 
e^{-\frac{t}{\eps}}|d_\eps(0)| + \eps L_f C_g
\label{e.x0est}
\; ,
\end{align}
which signifies the exponential attraction of the fast dynamics towards the slow centre manifold. Substituting (\ref{e.x0est}) into (\ref{e.yrederr}) yields upon application of the Gronwall lemma
\begin{align*}
|y_\eps(t)-Y(t)| 
&\le 
e^{L_g\left(1+L_f\right)t}
\left(
|y_\eps(0)-Y(0)| + \eps L_fL_gC_gt+\eps L_g|d_\eps(0)|
\right) 
\; ,
\end{align*}
which immediately yields the desired result using the initial condition $d_\eps(0)=c_{0,x}$ and $y_\eps(0)=Y(0)+c_{0,y}\eps$. 
\end{proof}


\subsection{Discretization error}
In order to bound the discretization error $|y^n-Y(t^n)|$ we first estimate how close the fast variables remain to the centre manifold during the application of the microsolver. Defining the deviation of the PI approximation of the fast variables from the approximate centre manifold at time $t^n+m\dt$ 
\begin{align}
d^{n,m} = x^{n,m}-\bar{f}(y^{n,m})\; ,
\end{align}
we formulate the following Lemma which constitutes the discrete time version of (\ref{e.x0est}):
\medskip
\begin{lemma}
\label{lemma_dnm}
Given assumptions ({\it A1}), ({\it A4}) and ({\it A6}), the error between the fast variables and the approximate centre manifold during the application of the PI microsolver is bounded for all $0\leq m\leq M$ by
\begin{align*}
|d^{n,m}|\le \twoparts
{\left( 1-\frac{\dt}{\varepsilon}\right)^m |d^{n,0}|  +\varepsilon L_fC_g}
{\left( 1-\frac{\dt^*}{\varepsilon}\right)^m |d^{n,0}|  +\frac{\dt}{\dt^*}\varepsilon  L_fC_g}
\;\; .
\end{align*}
\end{lemma}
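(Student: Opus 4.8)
The plan is to mirror the continuous-time estimate \eqref{e.x0est} from Theorem~\ref{theorem.Ec}, replacing the integral by a discrete sum and the exponential by its discrete analogue. First I would write down the recursion for $d^{n,m}$. Using the microstep definitions \eqref{micro_x}--\eqref{micro_y} I compute
\begin{align*}
d^{n,m+1} &= x^{n,m+1}-\bar{f}(y^{n,m+1})\\
&= x^{n,m} - \frac{\dt}{\eps}\left(\Lambda\, x^{n,m}-f(y^{n,m})\right) - \bar{f}(y^{n,m+1})\; .
\end{align*}
Since $\bar{f}(y)=\Lambda^{-1}f(y)$, the term $\frac{\dt}{\eps}f(y^{n,m})$ combines with $\Lambda x^{n,m}$ to produce $\left(I-\frac{\dt}{\eps}\Lambda\right)d^{n,m}$ after adding and subtracting $\bar{f}(y^{n,m})$, leaving a remainder $\bar{f}(y^{n,m})-\bar{f}(y^{n,m+1})$. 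The plan is therefore to obtain a recursion of the schematic form $d^{n,m+1} = \left(I-\frac{\dt}{\eps}\Lambda\right)d^{n,m} + \left(\bar{f}(y^{n,m})-\bar{f}(y^{n,m+1})\right)$, then bound it componentwise.

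Next I would control the forcing term. By the Lipschitz assumption \emph{A1} on $f$ (and hence on $\bar{f}=\Lambda^{-1}f$, using $\min\lambda_{ii}=1$) together with the microstep \eqref{micro_y} and the boundedness assumption \emph{A4} on $g$, I estimate
\begin{align*}
\left|\bar{f}(y^{n,m})-\bar{f}(y^{n,m+1})\right| \le L_f\left|y^{n,m}-y^{n,m+1}\right| = L_f\,\dt\,\left|g(x^{n,m},y^{n,m})\right| \le L_f C_g\,\dt\; .
\end{align*}
For the contraction factor I would pass to the worst eigenvalue. Since $\Lambda$ is diagonal with entries in $[1,\lambda]$, each component of $\left(I-\frac{\dt}{\eps}\Lambda\right)$ has factor $\left|1-\frac{\dt}{\eps}\lambda_{ii}\right|$; the slowest decay governs the bound. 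Under \emph{A6} we have $0<\dt\le 2\eps/\lambda$, so $\frac{\dt}{\eps}\lambda_{ii}\in(0,2]$ and $\left|1-\frac{\dt}{\eps}\lambda_{ii}\right|\le 1$, but the precise supremum splits into two regimes: for $0<\dt\le 2\eps/(\lambda+1)$ the dominant factor is $1-\frac{\dt}{\eps}$ (coming from $\lambda_{ii}=1$), whereas for $2\eps/(\lambda+1)<\dt<2\eps/\lambda$ it is $\frac{\dt}{\eps}\lambda-1 = 1-\frac{\dt^*}{\eps}$ with $\dt^*=2\eps-\lambda\dt$ (coming from $\lambda_{ii}=\lambda$). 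This case distinction is exactly the source of the two branches in the Lemma's statement.

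I would then iterate the scalar recursion $|d^{n,m+1}|\le r\,|d^{n,m}| + L_f C_g\,\dt$, where $r$ is the appropriate contraction factor ($1-\dt/\eps$ or $1-\dt^*/\eps$), giving the geometric-sum solution
\begin{align*}
|d^{n,m}|\le r^m|d^{n,0}| + L_f C_g\,\dt\sum_{k=0}^{m-1}r^k \le r^m|d^{n,0}| + L_f C_g\,\dt\,\frac{1}{1-r}\; .
\end{align*}
In the first regime $1-r=\dt/\eps$, so the sum contributes $L_f C_g\,\dt\cdot\eps/\dt = \eps L_f C_g$, matching the first branch. In the second regime $1-r=\dt^*/\eps$, so the forcing contributes $L_f C_g\,\dt\cdot\eps/\dt^* = \frac{\dt}{\dt^*}\eps L_f C_g$, matching the second branch. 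The main obstacle I anticipate is neither the recursion nor the summation but getting the two-regime supremum of $\left|1-\frac{\dt}{\eps}\lambda_{ii}\right|$ correct and verifying that the crossover occurs precisely at $\dt=2\eps/(\lambda+1)$; this requires comparing the two candidate maximisers $\lambda_{ii}=1$ and $\lambda_{ii}=\lambda$ and checking that $1-\dt/\eps = \frac{\dt}{\eps}\lambda - 1$ exactly at that threshold, which pins down the definition $\dt^*=2\eps-\lambda\dt$ used throughout.
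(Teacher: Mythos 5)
Your proposal is correct and follows essentially the same route as the paper's proof: the same one-step recursion $d^{n,m} = \left(\I-\frac{\dt}{\eps}\Lambda\right)d^{n,m-1} - \left(\bar{f}(y^{n,m})-\bar{f}(y^{n,m-1})\right)$, the same $L_fC_g\,\dt$ bound on the forcing via ({\it A1}), ({\it A4}) and \eqref{micro_y}, and the same two-regime evaluation $\left|\I-\frac{\dt}{\eps}\Lambda\right| = \max\left\{1-\frac{\dt}{\eps},\ \lambda\frac{\dt}{\eps}-1\right\}$ with crossover at $\dt = 2\eps/(\lambda+1)$ and $\dt^*=2\eps-\lambda\dt$, followed by summing the geometric series. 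The only cosmetic difference is that you bound $\sum_{k=0}^{m-1} r^k$ directly by $1/(1-r)$, whereas the paper first keeps the factor $1-r^m$ (which its subsequent Remark notes is needed in the limit $\dt^*\to 0$) before discarding it.
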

The first term is a manifestation of the exponential decay of the fast variables towards the slow centre manifold along their stable eigendirection. The second term proportional to $\eps$ describes, as we will see below, the cumulative effect of  changes of the slow variables $y$ during the microsteps causing a departure from the centre manifold for nonconstant $\bar{f}(y)$.

\begin{figure}
\includegraphics[width=0.8\textwidth]{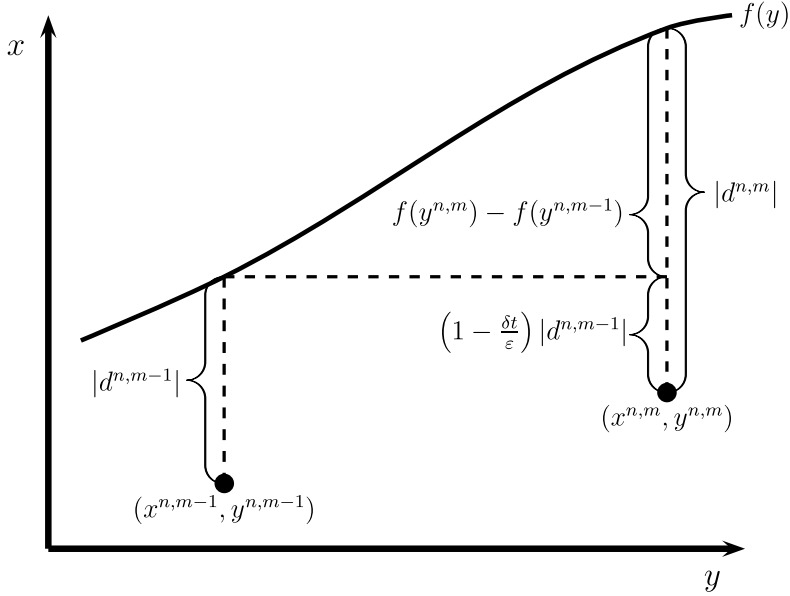}
\caption{Illustration of equation \eqref{e.dnmonestep} for $x_\eps \in \mathbb{R}$, so that $\Lambda=1$. The two dots show one microstep in PI.}
\label{fig:microstep}
\end{figure}

\begin{proof}
Using the definition of the PI microsolver (\ref{micro_x}), a recursive relationship for $d^{n,m}$ is readily established as
\begin{align}
d^{n,m}&=  x^{n,m} - \bar{f}(y^{n,m})
\nonumber
\\
& = \left(\I-\frac{\dt}{\varepsilon}\Lambda\right) d^{n,m-1} - \left( \bar{f}(y^{n,m}) - \bar{f}(y^{n,m-1})\right)
\; .
\label{e.dnmonestep}
\end{align} 
Let us pause here for a moment to discuss the terms appearing in this equation. The first term illustrates that at each microstep the distance between the $i$th fast variable and the approximate slow manifold decreases by a factor of $1-\dt \, \lambda_{ii}/\eps$. The second term represents the change in the approximate slow manifold during the drift of the slow variable $y$ in one microstep. This is illustrated in Figure~\ref{fig:microstep}.\\ 

\noindent
The cumulative effect of $M$ microsteps on $d^{n,m}$ is easily obtained from (\ref{e.dnmonestep}) as 
\begin{align}
d^{n,m} 
= \left(\I -\frac{\dt}{\varepsilon}\Lambda\right)^m d^{n,0} - \sum_{k=0}^{m-1}\left(\I  - \frac{\dt}{\varepsilon}\Lambda \right)^k \left(\bar{f}(y^{n,m-k})-\bar{f}(y^{n,m-1-k})\right)\; .
\label{e.dnm_pure}
\end{align} 
Taking absolute values and using the Lipschitz condition on $f$ {\em{(A1)}}, the bound on the slow vectorfield $g$ {\em{(A4)}} and (\ref{micro_y}), the cumulative effect of $M$ microsteps on the distance $|d^{n,m}|$ of the fast variables to the approximate centre manifold is
\begin{align*}
|d^{n,m}| &\leq \left|\I -\frac{\dt}{\varepsilon}\Lambda\right|^m |d^{n,0}| + L_f C_g \dt \sum_{k=0}^{m-1} \left|\I -\frac{\dt}{\varepsilon}\Lambda\right|^k
\\
&= \left|\I -\frac{\dt}{\varepsilon}\Lambda\right|^m |d^{n,0}| + L_f C_g \dt \frac{1 -  |\I -\frac{\dt}{\varepsilon}\Lambda|^m}{1 - |\I -\frac{\dt}{\varepsilon}\Lambda| }\;.
\end{align*}
To avoid divergence from the centre manifold we require
\[
\left|\I -\frac{\dt}{\varepsilon}\Lambda\right| { = \max\left\{ 1-\frac{\dt}{\eps}, \lambda \frac{\dt}{\eps} - 1\right\} } < 1\; ,
\]
implying the standard condition $0<\dt<2{\eps}/{\lambda}$. For ${ 0<\dt \leq \frac{2\eps}{\lambda+1} }$, $\left|\I -\frac{\dt}{\varepsilon}\Lambda\right| = 1-\frac{\dt}{\eps}$ and we obtain the bound
 \begin{align}
|d^{n,m}|
\leq
\left(1-\frac{\dt}{\varepsilon}\right)^m |d^{n,0}| + \varepsilon L_f C_g \left[1 -  \left(1-\frac{\dt}{\varepsilon}\right)^m\right]\; .
\label{e.dnm_dt}
\end{align}
Furthermore for $\frac{2\eps}{\lambda+1} < \dt < \frac{2\eps}{\lambda}$, introducing $\dt^\star=2\eps-\lambda\dt$, $\left|\I -\frac{\dt}{\varepsilon}\Lambda\right| = \lambda \frac{\dt}{\eps} - 1 = 1-\frac{\dt^*}{\eps}$, yielding
 \begin{align}
|d^{n,m}|
\leq
\left(1-\frac{\dt^*}{\varepsilon}\right)^m |d^{n,0}| + \frac{\dt}{\dt^*} \varepsilon L_f C_g \left[1 -  \left(1-\frac{\dt^*}{\varepsilon}\right)^m\right]\; .
\label{e.dnm_dt*}
\end{align}
Lemma~\ref{lemma_dnm} follows from \eqref{e.dnm_dt}--\eqref{e.dnm_dt*}. Note that the term proportional to $\eps$ was generated by a sum of error terms in $\dt$ relating to the change in the manifold during the drift of the slow variable $y^{n,m}$ over each microstep.
\end{proof}
\medskip
\begin{remark}
 In the limit $\dt^\star\to 0$ (i.e. $\dt\to 2\eps/\lambda$) the term $\left[1 -  \left(1-\frac{\dt^*}{\varepsilon}\right)^m\right]$ in \eqref{e.dnm_dt*}, which is neglected in Lemma~\ref{lemma_dnm}, is crucial to obtain a finite estimate $|d^{n,m}| \le |d^{n,0}| +m\dt L_f C_g$.
\end{remark}
\medskip
\begin{remark} 
In the case $x_\eps\in \R$ with $\Lambda=1$, a direct estimation of (\ref{e.dnmonestep}) implies that for the particular choice $\dt=\eps$ we have $|d^{n,m}|\leq L_fC_g\dt$, i.e. there is no dependence on initial conditions for the fast variable; this is a peculiarity of the underlying forward Euler scheme used (cf. (\ref{micro_x})) where for $\dt=\eps$ we have $x^{n,m+1}=f(y^{n,m})$. 
\end{remark}

\medskip
We have bounded the distance of the PI approximation of the fast variables $x^{n,m}$ from the slow centre manifold $\bar{f}(y^{n,m})$ over the microsteps. We now establish bounds on the distance of the PI approximation of the slow variables $y^{n,m}$ from the solution of the reduced dynamics on the centre manifold over the microsteps. 
We therefore introduce the auxiliary time-continuous system
\begin{align}
\label{Yn.o}  \dot{Y}^{(n)}(s) &= G(Y^{(n)}(s)) \;  \\
\label{Yn.i}  Y^{(n)}(0) &= y^n \; ,
\end{align}
which resolves the reduced slow dynamics \eqref{e.CMT} initialised after each macrostep at time $t^n = n\td$ with the PI approximation of the slow variables $y^n$. Further we introduce the discrete Euler approximation of the reduced slow dynamics \eqref{e.CMT}
\begin{align}
\label{phi.a} 
 \phieta^m &= \phieta^{m-1} + \dt\,G(\phieta^{m-1})\\
\label{phi.b}
\phieta^0 &= \eta \; ,
\end{align}
for arbitrary initial conditions $\eta$. Note that for the particular choice $\eta = y^n$ (\ref{phi.a})-(\ref{phi.b}) corresponds to an Euler discretization of the system \eqref{Yn.o}-\eqref{Yn.i}.

\medskip
\begin{lemma} Assuming ({\it A1}),({\it A2}),({\it A5}) and ({\it A7}), the numerical estimate $y^{n,m}$ of the slow variable is close to $\phiyn^m$ with
\begin{align*}
|y^{n,m} - \phiyn^m| \leq 
\twoparts
{\;\;\;\;\;\;3 L_g \eps|d^{n,0}| +  2 \frac{L_fC_g\eps}{L_f+1} \;\; + \mathcal{O}(m\dt^2)}
{\left( 3 L_g \eps|d^{n,0}| +  2 \frac{L_fC_g\eps}{L_f+1}\right)\frac{\dt}{\dt^*}  + \mathcal{O}(m\dt^2)} \;\; .
\end{align*}
\label{phi.bound}
\end{lemma}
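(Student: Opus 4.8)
The plan is to derive a one-step recursion for the difference $e^m:=y^{n,m}-\phiyn^m$ and resolve it by a discrete Gronwall argument, feeding in the bound on the manifold departure $d^{n,m}$ supplied by Lemma~\ref{lemma_dnm}. Since both the microstep \eqref{micro_y} for $y^{n,m}$ and the Euler recursion \eqref{phi.a} for $\phiyn^m$ are initialised at $y^n$, we have $e^0=0$ and
\[
e^{m+1}=e^m+\dt\bigl(g(x^{n,m},y^{n,m})-G(\phiyn^m)\bigr).
\]
First I would split the increment by inserting the approximate manifold $\bar f$: the manifold-departure piece $g(x^{n,m},y^{n,m})-g(\bar f(y^{n,m}),y^{n,m})$, bounded by $L_g|d^{n,m}|$ through ({\it A2}); the slow-variable piece $g(\bar f(y^{n,m}),y^{n,m})-g(\bar f(\phiyn^m),\phiyn^m)$, bounded by $L_g(1+L_f)|e^m|\le L_G|e^m|$ through ({\it A1})--({\it A2}); and a consistency remainder of order $\mathcal{O}(\dt^2)$ per step, accounting for the forward-Euler truncation (controlled by ({\it A5})) together with the $\mathcal{O}(\eps)$ error of the approximate centre manifold relative to the exact reduced field $G$ of \eqref{Yn.o}.

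This produces the scalar linear recursion
\[
|e^{m+1}|\le (1+\dt L_G)\,|e^m|+\dt L_g\,|d^{n,m}|+\mathcal{O}(\dt^2),
\]
into which I would substitute the two regimes of Lemma~\ref{lemma_dnm}. Writing $p=1+\dt L_G$ and letting $q=1-\dt/\eps$, $a=\eps L_fC_g$ in the first regime while $q=1-\dt^*/\eps$, $a=\tfrac{\dt}{\dt^*}\eps L_fC_g$ in the second, the solution with $e^0=0$ is the discrete convolution
\[
|e^m|\le \dt L_g|d^{n,0}|\,\frac{p^m-q^m}{p-q}+\dt L_g\,a\,\frac{p^m-1}{p-1}+\mathcal{O}(m\dt^2).
\]
The two geometric sums have thereby been evaluated in closed form, and the remaining task is to bound them sharply.

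For the decay-driven sum I would bound the denominator from below by discarding the $\dt L_G$ contribution, using $p-q\ge\dt/\eps$ in the first regime and $p-q\ge\dt^*/\eps$ in the second; this is exactly what turns $\dt L_g|d^{n,0}|$ into $\eps L_g|d^{n,0}|$, respectively $\tfrac{\dt}{\dt^*}\eps L_g|d^{n,0}|$, and is the origin of the overall prefactor $\dt/\dt^*$ in the second regime. For the constant-forcing sum I would use $p-1=\dt L_G$ together with the convexity estimate $p^m-1\le e^{m\dt L_G}-1\le(e-1)\,m\dt L_G$, valid because ({\it A7}) gives $m\dt L_G\le 1$; combined with $L_g\,m\dt\le L_g M\dt\le 1/(1+L_f)$, again from ({\it A7}), this yields the term $2L_fC_g\eps/(L_f+1)$. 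The explicit numerical constants follow from $p^m\le e^{m\dt L_G}\le e<3$ and $e-1<2$.

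The two regimes are then handled in parallel, the only differences being the decay rate $q$ and the amplitude $a$ inherited from Lemma~\ref{lemma_dnm}. I expect the main obstacle to be the careful bookkeeping of this inhomogeneous discrete Gronwall sum with its two distinct forcing contributions --- in particular the regime-dependent lower bound on $p-q$ that generates the $\dt/\dt^*$ prefactor --- and ensuring that the growth factor $p^m$ never exceeds $e$, for which ({\it A7}) is precisely the hypothesis required. Collecting the genuinely second-order pieces into $\mathcal{O}(m\dt^2)$ then completes the estimate.
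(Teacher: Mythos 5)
Your proposal is correct and reaches the paper's constants, but it takes a genuinely different route. The paper inserts the time-continuous auxiliary solution $Y^{(n)}(m\dt)$ of \eqref{Yn.o}--\eqref{Yn.i} and splits the error as in \eqref{lemma.aux}, bounding $|Y^{(n)}(m\dt)-\phiyn^m|$ by the standard Euler convergence argument and $|y^{n,m}-Y^{(n)}(m\dt)|$ by the hybrid recursion \eqref{alt.aux}; each piece then carries a genuine Taylor truncation term $C^*\dt^2$ per step via ({\it A5}), which is the source of the $\mathcal{O}(m\dt^2)$ in the statement. You instead compare the two discrete sequences \eqref{micro_y} and \eqref{phi.a} directly, obtaining the exact recursion $|e^{m+1}|\le(1+\dt L_G)|e^m|+\dt L_g|d^{n,m}|$. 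Since neither sequence is a continuous flow, there is in fact no truncation remainder on your route: ({\it A5}) is never genuinely used, your ``consistency remainder'' is vacuous, and you prove a strictly sharper bound in which the $\mathcal{O}(m\dt^2)$ term is absent (including it anyway is of course harmless for an upper bound). From the recursion onward the two arguments coincide: the same convolution sums $\sum_k p^k q^{m-1-k}$ and $\sum_k p^k$ appear, the same lower bounds $p-q\ge \dt/\eps$, respectively $p-q\ge \dt^*/\eps$, generate the factors $\eps$ and $\dt/\dt^*$, and ({\it A7}) is invoked in the same two places (your $p^m\le e^{m\dt L_G}\le e<3$ versus the paper's $1+2L_G m\dt\le 3$, and $L_g m\dt\le 1/(1+L_f)$ for the constant-forcing sum). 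One imprecision worth flagging: you fold the $\mathcal{O}(\eps)$ discrepancy between the exact reduced field $g(h_\eps(\cdot),\cdot)$ and the approximate one $g(\bar f(\cdot),\cdot)$ into the per-step $\mathcal{O}(\dt^2)$ remainder, which is not legitimate when $\dt\ll\eps$ (that term is $\mathcal{O}(L_g\eps\dt)$ per step and would accumulate, via ({\it A7}), to an extra $\mathcal{O}(\eps)$ contribution, not $\mathcal{O}(m\dt^2)$); however, the paper itself works throughout with $G(y)=g(\bar f(y),y)$ --- this convention is implicit in its own bound $|g(x^{n,m-1},y^{n,m-1})-G(y^{n,m-1})|\le L_g|d^{n,m-1}|$ in \eqref{alt.aux} --- and under that convention your hedge term is identically zero, so nothing in your argument breaks. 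Net assessment: your decomposition is more elementary and eliminates one auxiliary object and the $C^*$ terms; the paper's detour through $Y^{(n)}$ buys only an explicit constant ($2C^*m\dt^2$ in \eqref{lemma.close}) for a remainder your approach shows is unnecessary.
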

\begin{proof}
Employing the auxiliary system \eqref{Yn.o}-\eqref{Yn.i}, we split the error into
\begin{align}
\label{lemma.aux}
|y^{n,m} - \phiyn^m| \leq |y^{n,m} - Y^{(n)}(m\dt)| + |Y^{(n)}(m\dt) - \phiyn^m| \;.
\end{align}
The second term above can be readily bounded by the standard proof for Euler convergence (see for example \cite{Iserles}). The first term is bounded by a slight variation of the same proof. For completeness we provide the proof of the bounds for both terms, beginning with the bound on the second term. By Taylor expanding we write upon using the auxiliary dynamics (\ref{Yn.o})
\begin{align}
\label{lemma.T}
Y^{(n)}(m\dt) = Y^{(n)}\big((m-1)\dt\big) + \dt\,G\big(Y^{(n)}\big((m-1)\dt\big)\big) + \mathcal{O}(\dt^2) \;.
\end{align}
Note that assumptions ({\it A1}) and ({\it A2}) imply a Lipschitz constant $L_G\leq L_g(1+L_f)$ for the reduced vectorfield $G$. Using the Euler discretization \eqref{phi.a} of the reduced auxiliary system \eqref{Yn.o}-\eqref{Yn.i} with initial condition $y^n$, and employing assumption ({\it A5}) with $C^* = \sup|\ddot{Y}(t)|$ to bound the ${\mathcal{O}}(\dt^2)$-term in (\ref{lemma.T}), we obtain
\begin{align*}
|Y^{(n)}(m\dt) - \phiyn^m| &\leq \left|Y^{(n)}\big(\left(m-1\right)\dt\big) - \phiyn^{m-1}\right|\\
&+ \dt\left| G\big(Y^{(n)}\big((m-1)\dt\big)\big) - G(\phiyn^{m-1}) \right| + C^*\dt^2 
\\
&\leq (1 + L_G\dt) \left|Y^{(n)}\big(\left(m-1\right)\dt\big) - \phiyn^{m-1}\right| + C^*\dt^2
\\
&\leq C^*\dt^2 \sum_{k=0}^{m-1} (1+L_G\dt)^k \; ,
\end{align*}
since we initialize with $Y^{(n)}(0) = \phiyn^{0}$. Evaluating the sum we find
\begin{align}
\nonumber |Y^{(n)}(m\dt) - \phiyn^{m}| &\leq C^*\dt^2 \frac{(1+L_G\dt)^m-1}{L_G\dt}
\\
&\le 2 C^* m \dt^2 \;,
\label{lemma.close}
\end{align}
where assumption ({\it A7}), that $L_GM\dt\le 1$, was used to bound $e^{L_Gm\dt}-1\leq 2 L_G m \dt$.\\
To bound the first term in \eqref{lemma.aux}, $|y^{n,m} - Y^{(n)}(m\dt)|$, we analogously obtain 
\begin{align}
\nonumber \left|y^{n,m} - Y^{(n)}(m\dt)\right| 
&\leq  \left|y^{n,m-1} - Y^{(n)}\big((m-1)\dt\big)\right| 
\\
\nonumber &+ \dt\left| g(x^{n,m-1},y^{n,m-1}) - G(y^{n,m-1}) \right|
\\
\nonumber &+ \dt \left| G(y^{n,m-1}) -  G\big(Y^{(n)}\big((m-1)\dt\big)\big)\right| + C^*\dt^2
\\
\nonumber &\leq  (1 + L_G \dt) \left|y^{n,m-1} - Y^{(n)}\big((m-1)\dt\big)\right| 
\\
\label{alt.aux} &+ L_g \dt\left| d^{n,m-1} \right| + C^*\dt^2 \; .
\end{align}
Employing Lemma~\ref{lemma_dnm} with $0<\dt<\frac{2\eps}{\lambda+1}$, we obtain
\begin{align*}
 \left|y^{n,m} - Y^{(n)}(m\dt)\right| &\leq \dt \,L_g |d^{n,0}| \sum_{k=0}^{m-1}(1+L_G\dt)^k \left(1-\frac{\dt}{\eps}\right)^{m-1-k}
\\
 &+ \left[L_g L_f C_g\eps\dt+ C^*\dt^2 \right] \sum_{k=0}^{m-1} (1+L_G\dt)^k \;,
\end{align*}
since $y^{n,0}= y^n = Y^{(n)}(0)$. Evaluating the sums and using assumption ({\it A7}) yields
\begin{align*}
\left|y^{n,m} - Y^{(n)}(m\dt)\right| \leq& L_g \dt |d^{n,0}|\frac{(1+L_G\dt)^m-\left(1-\frac{\dt}{\eps}\right)^m}{L_G\dt + \frac{\dt}{\eps}} + 2C^* m \dt^2 
\\
&+ 2 L_gL_fC_g\eps m \dt
\\
&\leq L_g \eps |d^{n,0}|(1+2L_Gm\dt) + 2C^* m \dt^2 + 2 \frac{L_fC_g\eps}{L_f+1} 
\\
&\leq 3 L_g \eps |d^{n,0}| + 2C^* m \dt^2 +  2 \frac{L_fC_g\eps}{L_f+1} \;  ,
\end{align*}
which concludes the proof of the lemma for $0<\dt<\frac{2\eps}{\lambda+1}$. Analogously for $\frac{2\eps}{\lambda+1}<\dt<\frac{2\eps}{\lambda}$, Lemma~\ref{lemma_dnm} is used to estimate \eqref{alt.aux}
\begin{align*}
\left|y^{n,m} - Y^{(n)}(m\dt)\right|  &\leq \dt \,L_g |d^{n,0}| \sum_{k=0}^{m-1}(1+L_G\dt)^k \left(1-\frac{\dt^*}{\eps}\right)^{m-1-k}
\\
 &+ \left[L_g L_f C_g\eps\frac{\dt^2}{\dt^*}+ C^*\dt^2 \right] \sum_{k=0}^{m-1} (1+L_G\dt)^k \;
 \\
 &\leq  L_g \dt |d^{n,0}|\frac{(1+L_G\dt)^m-\left(1-\frac{\dt^*}{\eps}\right)^m}{L_G\dt + \frac{\dt^*}{\eps}} + 2C^* m \dt^2 
\\
&+ 2 L_gL_fC_g\eps \, m \frac{\dt^2}{\dt^*}
\\
&\leq3L_g \eps \frac{\dt}{\dt^*} |d^{n,0}| + 2C^* m \dt^2 + 2 \frac{L_fC_g\eps}{L_f+1} \frac{\dt}{\dt^*} \;\; .
\end{align*} 
\end{proof}

Lemma~\ref{phi.bound} established bounds on the PI approximation of the slow variables and an Euler approximation of the reduced system during the microsteps. We will now establish bounds on the PI approximation of the slow variables and an Euler approximation of the reduced system during one macrostep. In order to do so we introduce the vector field
\begin{align}
\label{define_G_twiddle}\tilde{G}(\eta) := \sum_{m=0}^M W_m G(\phieta^m) \; ,
\end{align}
with weights $W_m$ given by \eqref{e.WPI} as in PI. We now proceed to bound $|\tilde{g}(x^n,y^n) - \tilde{G}(y^n)|$.

\medskip
\begin{lemma} \label{g-G}
Assuming ({\it A1})-({\it A6}), the auxiliary vector field $\tilde{G}$ is close to the vector field $\tilde{g}$ of PI, with
\begin{align*}
|\tilde{g}(x^n,y^n)-\tilde{G}(y^n)| \leq& 
\twoparts
{\begin{aligned}&L_g \bigg(\frac{\varepsilon}{\td} + 3L_g(1+L_f)\eps\bigg)|d^{n,0}| 
\\&+ L_g e^{-\frac{M\dt}{\varepsilon}}|d^{n,0}| + 3L_g L_f C_g \varepsilon\end{aligned}\;\;\vspace{10pt}}
{\begin{aligned}&L_g \bigg(\frac{\dt}{\dt^*}\frac{\varepsilon}{\td} + 3\frac{\dt}{\dt^*}L_g(1+L_f)\eps \bigg) |d^{n,0}| 
\\&+ L_g e^{-\frac{M\dt^*}{\varepsilon}} |d^{n,0}|+ 3 \frac{\dt}{\dt^*}  L_g L_f C_g \eps
\end{aligned}\;\;} \;\; .
\end{align*}
\end{lemma}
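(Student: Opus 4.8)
The plan is to bound the difference termwise. Using the definitions \eqref{define_g_twiddle} and \eqref{define_G_twiddle} with $\eta=y^n$,
\begin{align*}
\tilde{g}(x^n,y^n) - \tilde{G}(y^n) = \sum_{m=0}^M W_m \left[ g(x^{n,m},y^{n,m}) - G(\phiyn^m) \right] \; .
\end{align*}
First I would use the representation $G(y)=g(\bar{f}(y),y)$ of the reduced vector field on the approximate centre manifold, insert the intermediate value $G(y^{n,m})=g(\bar{f}(y^{n,m}),y^{n,m})$, and apply the triangle inequality. The Lipschitz assumption (A2) on $g$ bounds the first piece by $L_g|x^{n,m}-\bar{f}(y^{n,m})|=L_g|d^{n,m}|$, while the Lipschitz constant $L_G\le L_g(1+L_f)$ of $G$ bounds the second by $L_g(1+L_f)|y^{n,m}-\phiyn^m|$. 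This reduces the estimate to
\begin{align*}
|\tilde{g}(x^n,y^n) - \tilde{G}(y^n)| \le \sum_{m=0}^M W_m \left( L_g|d^{n,m}| + L_g(1+L_f)|y^{n,m}-\phiyn^m| \right) \; .
\end{align*}

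Next I would feed in the two earlier estimates, Lemma~\ref{lemma_dnm} for $|d^{n,m}|$ and Lemma~\ref{phi.bound} for $|y^{n,m}-\phiyn^m|$, treating the regimes $0<\dt\le 2\eps/(\lambda+1)$ and $2\eps/(\lambda+1)<\dt<2\eps/\lambda$ in parallel, the latter with $\dt^*$ in place of $\dt$ and the attendant factor $\dt/\dt^*$. The $m$-independent parts of both lemmas are handled by the normalisation $\sum_m W_m=1$ from \eqref{e.Wnormal}: combining $L_g\cdot\eps L_f C_g$ from Lemma~\ref{lemma_dnm} with $L_g(1+L_f)\cdot 2L_fC_g\eps/(L_f+1)$ from Lemma~\ref{phi.bound} yields the term $3L_gL_fC_g\eps$, while the $3L_g\eps|d^{n,0}|$ contribution of Lemma~\ref{phi.bound} supplies the $3L_g^2(1+L_f)\eps|d^{n,0}|$ term.

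The crux is the weighted geometric sum of the decaying factor from Lemma~\ref{lemma_dnm}, namely $\sum_{m=0}^M W_m(1-\dt/\eps)^m$. Here I would substitute the explicit PI weights \eqref{e.WPI}, split off the terminal index $m=M$, and sum the geometric series over $m=0,\dots,M-1$. The crucial cancellation is that the prefactor $\dt/\td$ on the weights with $m<M$ combines with the $\eps/\dt$ produced by the geometric series into exactly $(\eps/\td)\,[1-(1-\dt/\eps)^M]$, while the terminal weight $\Dt/\td$ contributes $(\Dt/\td)(1-\dt/\eps)^M$. Bounding $(1-\dt/\eps)^M\le e^{-M\dt/\eps}$ (legitimate since $\dt\le 2\eps/(\lambda+1)\le\eps$ keeps the base in $[0,1)$) and using $\Dt/\td\le 1$ then produces precisely the $L_g(\eps/\td)|d^{n,0}|$ and $L_g e^{-M\dt/\eps}|d^{n,0}|$ contributions; the second regime is identical with $\dt^*$ replacing $\dt$. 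I expect this weighted-sum evaluation---recognising that the PI weights telescope into $\eps/\td$ and isolating the clean exponential at $m=M$---to be the main obstacle. The one remaining loose end is the $\mathcal{O}(m\dt^2)$ remainder in Lemma~\ref{phi.bound}: summed against the $W_m$ its largest contribution is of order $M\dt^2$, and by (A6), $\dt\le 2\eps/\lambda$, together with (A7), $L_GM\dt\le 1$, this is $\mathcal{O}(\eps)$ and is absorbed into the $\eps$-terms of the stated bound.
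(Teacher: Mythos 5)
Your proposal is correct and follows essentially the same route as the paper's proof: the same decomposition using $G(\cdot)=g(\bar f(\cdot),\cdot)$ to reduce the difference to $\sum_m W_m L_g\bigl[|d^{n,m}|+(1+L_f)|y^{n,m}-\phiyn^m|\bigr]$, the same insertion of Lemmas~\ref{lemma_dnm} and~\ref{phi.bound}, and the same evaluation of the weighted geometric series with the PI weights \eqref{e.WPI} to produce the $\varepsilon/\td$ and $e^{-M\dt/\varepsilon}$ (respectively $\dt/\dt^*$-weighted) terms. Your explicit absorption of the $\mathcal{O}(m\dt^2)$ remainder via ({\it A6})--({\it A7}) is a minor bookkeeping point the paper handles silently, but it does not change the argument.
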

\begin{proof}
\noindent
Employing the Lipschitz condition ({\it{A1}}) on $f$ and ({\it{A2}}) on $g$, we write
\begin{align}
\nonumber |\tilde{g}(x^n,y^n)-\tilde{G}(y^n)| =&\left|  \sum_{m=0}^M W_m \Big[ g(x^{n,m},y^{n,m}) - G(\phiyn^m) \Big]  \right|
\\
\nonumber &\leq \sum_{m=0}^M W_m L_g  \Big[ |x^{n,m} - \bar{f}(\phiyn^m)| + |y^{n,m} - \phiyn^m|  \Big]
\\
\nonumber &\leq \sum_{m=0}^M W_m L_g  \Big[ |x^{n,m} - \bar{f}(y^{n,m})| + |\bar{f}(y^{n,m}) - \bar{f}(\phiyn^m)| \\
\nonumber &\hspace{50pt}+ |y^{n,m} - \phiyn^m|  \Big]
\\ 
\label{g-G.aux}&\leq \sum_{m=0}^M W_m L_g  \Big[ |d^{n,m}| + (1+L_f)|y^{n,m} - \phiyn^m|  \Big] \; .
\end{align}
Using Lemmas~\ref{lemma_dnm}~and~\ref{phi.bound} for the case when $0<\dt<\frac{2\eps}{\lambda+1}$, we expand 
\begin{align}
\nonumber |\tilde{g}(x^n,y^n)-\tilde{G}(y^n)| \leq
&
\sum_{m=0}^M W_m \left[ L_g\left(1-\frac{\dt}{\eps}\right)^m |d^{n,0}|\right]+ 3 L_g L_f C_g \eps 
\\
\label{E_s}&\qquad\qquad+ 3L_g^2(1+L_f)\eps|d^{n,0}| \; .
\end{align}

\noindent Inserting the weights $W_m$ (\ref{e.WPI}) which characterize PI, and evaluating the geometric series, we obtain
\begin{align*}
|\tilde{g}(x^n,y^n)-\tilde{G}(y^n)| &\leq \sum_{m=0}^{M-1} \frac{\dt}{\td}  L_g\left(1-\frac{\dt}{\eps}\right)^m |d^{n,0}| + \frac{\Dt}{\td}  L_g\left(1-\frac{\dt}{\eps}\right)^M |d^{n,0}|
\\
& + 3 L_g L_f C_g \eps + 3L_g^2(1+L_f)\eps|d^{n,0}|
\\
&\leq L_g \left(\frac{\varepsilon}{\td} + 3L_g(1+L_f)\eps + e^{-\frac{M\dt}{\varepsilon}} \right)|d^{n,0}|+ 3 L_g L_f C_g \varepsilon \; ,
\end{align*}
which concludes the proof for that case. Using Lemmas~\ref{lemma_dnm}~and~\ref{phi.bound} for $ \frac{2\eps}{\lambda+1} < \dt < \frac{2\eps}{\lambda}$ in \eqref{g-G.aux}, we obtain analogously
\begin{align}
\nonumber |\tilde{g}(x^n,y^n)-\tilde{G}(y^n)| &\leq
\sum_{m=0}^M W_m \left[ L_g\left(1-\frac{\dt^*}{\eps}\right)^m |d^{n,0}|\right]+ {3 \frac{\dt}{\dt^*}L_g L_f C_g \eps  }
\\
\label{E_s.aux}&\qquad\qquad+ 3\frac{\dt}{\dt^*}L_g^2(1+L_f)\eps|d^{n,0}| \; 
\\
\nonumber  &\leq L_g \left(\frac{\dt}{\dt^*} \frac{\varepsilon}{\td}+ 3\frac{\dt}{\dt^*} L_g(1+L_f)\eps+ e^{-\frac{M\dt^*}{\varepsilon}} \right)|d^{n,0}|+ 3 \frac{\dt}{\dt^*} L_g L_f C_g \varepsilon \; .
\end{align}
\end{proof}
\begin{remark}
The above estimates \eqref{E_s} and \eqref{E_s.aux} not only hold for PI, but also for the seamless formulation of HMM, since they are independent of the particular choice for the weights $W_m$ and of the method of reinitialization of the fast variables $x^n$.
\end{remark}

\medskip
\noindent 
We have established in Lemma~\ref{g-G} that the vector field $\tilde{G}$ given by \eqref{define_G_twiddle} is close to the PI approximation $\tilde{g}$ of the slow dynamics over a time step of $\td$. In the following Lemma we demonstrate that $\tilde{G}$ can be used to step forward the reduced slow variables in a macrostep.

\medskip
\begin{lemma}
\label{lemma_Gtwiddle} 
$\tilde{G}(Y(t^n))$ provides a numerical estimate of the reduced slow vector field with
\begin{align*}
Y(t^{n+1}) - Y(t^{n}) = \td \tilde{G}(Y(t^{n})) + \mathcal{O}(\td^2) \;,
\end{align*}
where the error term $\mathcal{O}(\td^2)$ is bounded by $2C^*\td^2$.
\end{lemma}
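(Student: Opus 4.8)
The plan is to exploit a telescoping identity special to the projective-integration weights, which turns $\td\,\tilde{G}(Y(t^n))$ into the increment of a genuine composite Euler scheme. First I would insert the weights (\ref{e.WPI}) into the definition (\ref{define_G_twiddle}) of $\tilde{G}$ and use the Euler microstep recursion (\ref{phi.a}), in the form $\phiYn^{m+1}-\phiYn^m=\dt\,G(\phiYn^m)$, to collapse the sum of the $M$ microstep contributions:
\begin{align*}
\td\,\tilde{G}(Y(t^n)) = \sum_{m=0}^{M-1}\dt\,G(\phiYn^m) + \Dt\,G(\phiYn^M) = \big(\phiYn^M - Y(t^n)\big) + \Dt\,G(\phiYn^M).
\end{align*}
The right-hand side is exactly the state reached by performing $M$ Euler microsteps of size $\dt$ followed by one Euler macrostep of size $\Dt$, applied to the reduced flow $\dot{Y}=G(Y)$ started from $Y(t^n)$. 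Writing $\psi:=\phiYn^M+\Dt\,G(\phiYn^M)$, the claim reduces to the single global Euler estimate $|Y(t^{n+1})-\psi|\le 2C^*\td^2$.

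The payoff of this reformulation is that no ``wrong vector field'' contribution appears: since $\tilde{G}$ has been identified with a consistent composite Euler increment for the exact reduced dynamics, only truncation error survives, which is precisely why the target bound involves $C^*=\sup|\ddot{Y}|$ alone. I would estimate $|Y(t^{n+1})-\psi|$ in two stages. For the microsteps, note that because $Y(t^n)$ lies on the exact reduced trajectory, $\phiYn^m$ is simply the Euler approximation of $Y(t^n+m\dt)$; the accumulation argument already carried out in (\ref{lemma.close}) — Taylor expansion of $Y$ with remainder controlled by $C^*$ (assumption \emph{A5}), amplified and summed by the discrete Gronwall bound using \emph{A7} that $L_G M\dt\le 1$ — then applies verbatim and yields $|Y(t^n+M\dt)-\phiYn^M|\le 2C^*M\dt^2$.

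For the concluding macrostep I would Taylor expand $Y$ about $t^n+M\dt$ and subtract the Euler step, obtaining
\begin{align*}
Y(t^{n+1}) - \psi = \big[Y(t^n+M\dt) - \phiYn^M\big] + \Dt\big[G(Y(t^n+M\dt)) - G(\phiYn^M)\big] + \tfrac{\Dt^2}{2}\ddot{Y}(\xi),
\end{align*}
and bound the three terms using the microstep estimate above, the Lipschitz constant $L_G\le L_g(1+L_f)$ for the middle term, and \emph{A5} for the remainder. Invoking $L_G M\dt\le 1$ (\emph{A7}) to keep the factor multiplying $\Dt$ in the middle term of order one, and the elementary inequality $2M\dt^2+2\Dt\dt+\Dt^2\le 2\td^2$ (immediate from $\td=\Dt+M\dt$ and $M\ge1$), the three contributions combine to $2C^*\td^2$, as claimed.

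The only genuine idea is the telescoping identity of the first paragraph; once $\td\,\tilde{G}(Y(t^n))$ is recognized as the increment of a true composite Euler scheme, what remains is routine Euler consistency analysis that recycles (\ref{lemma.close}) for the microsteps. The one point demanding care — and the likely main obstacle to a clean statement — is extracting the sharp constant $2C^*$: the accumulated microstep error $2C^*M\dt^2$ is propagated through the macrostep with amplification $1+L_G\Dt$, so \emph{A7} is essential to keep this factor $\mathcal{O}(1)$ and to prevent a spurious $L_G$-dependent, or exponentially growing, constant from contaminating the local bound.
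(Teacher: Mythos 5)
Your proposal is correct, but it proves the lemma by a genuinely different route than the paper. The paper never telescopes: it Taylor expands $Y(t^{n+1})$ about $t^n$ with remainder bounded through ({\it A5}) by $C^*\td^2$, inserts $\pm\,\td\,\tilde{G}(Y(t^n))$, and then bounds the vector-field mismatch directly, $|G(Y(t^n))-\tilde{G}(Y(t^n))|\le\sum_{m} W_m L_G|Y(t^n)-\phiYn^m|\le L_G C_G\td$, using only the normalization \eqref{e.Wnormal} of the weights and the crude drift bound $|\phiYn^m-Y(t^n)|\le C_G\,m\dt\le C_G\td$; it closes by identifying $L_GC_G$ with $C^*=\sup|\ddot Y|=\sup|\D G(Y)\,G(Y)|$. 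That argument is shorter, works for arbitrary weights satisfying \eqref{e.Wnormal} (not just the PI weights), and does not need ({\it A7}); on the other hand, in general one only has $C^*\le L_GC_G$, so strictly speaking the paper's proof yields $(C^*+L_GC_G)\td^2$ and the constant $2C^*$ rests on that identification. Your telescoping identity --- summing \eqref{phi.a} against the PI weights \eqref{e.WPI} so that $\td\,\tilde{G}(Y(t^n))$ becomes the increment of the composite ($M$ microsteps plus one macrostep) Euler scheme for $\dot Y=G(Y)$, which is the reduced-system analogue of the algebra behind \eqref{e.PIHMM}--\eqref{e.PIHMMy} --- exploits the specific weights \eqref{e.WPI} and buys an honest $2C^*\td^2$ bound by pure consistency analysis, at the price of invoking ({\it A7}) twice (once in recycling \eqref{lemma.close}, once to tame the macrostep amplification $\Dt L_G\cdot 2C^*M\dt^2\le 2C^*\Dt\dt$); that assumption is available here, since the lemma is only used under ({\it A1})--({\it A8}). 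Your final bookkeeping checks out: $C^*\big(2M\dt^2+2\Dt\dt+\tfrac{1}{2}\Dt^2\big)\le 2C^*\td^2$ holds for $M\ge1$ since $\td=\Dt+M\dt$.
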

\begin{proof}
By Taylor expanding we write
\begin{align*}
Y(t^{n+1}) - Y(t^{n}) 
&= \td  \tilde{G}(Y(t^{n})) + \td  \left({G}(Y(t^{n})) -  \tilde{G}(Y(t^{n}))\right) + \mathcal{O}(\td^2) \; ,
\end{align*}
where according to assumption ({\it{A5}}) the ${\mathcal{O}}(\td^2)$-term is bounded by $C^\star \td^2$. We now bound
\begin{align*}
\left|{G}(Y(t^{n})) -  \tilde{G}(Y(t^{n}))\right| &= \left| {G}(Y(t^{n})) -  \sum_{m=0}^M W_m {G}(\phiYn^m) \right|
\\
&\leq \sum_{m=0}^M W_m L_G \left| Y(t^{n}) - \phiYn^m \right| 
\\
&\leq L_G C_G \td \;,
\end{align*}
where $C_G \leq C_g$ is the maximum of $|G|$. Noting that $ C^* = \sup|\ddot{Y}| = \sup|DG(Y)\, G(Y)| = L_G C_G$ completes the proof.
\end{proof}

\noindent
Using Lemmas~\ref{lemma_dnm}, \ref{phi.bound}, \ref{g-G} and \ref{lemma_Gtwiddle} we now proceed to bound the discretization error
\begin{align}
|E_d^n| = |y^n-Y(t^n)|\; ,
\end{align}
and formulate the following theorem.
\medskip
\begin{theorem}
\label{theorem.Ed}
Given the assumptions ({\it A1})--({\it A8}), there exists a constant $C_2$ such that on a fixed time interval $T$, for each $n\Dt\le T$, the error between the solution of the projective integration scheme and the exact solutions of the reduced system is bounded by
\begin{align*}
E_d^n \le  
 \twoparts
{\quad \;\;\;\;C\left(
\td + \varepsilon
+\left(\frac{\varepsilon}{\td}+\eps+e^{-\frac{M\dt}{\varepsilon}} \right)|d^n| 
\right)}
{C\frac{\dt}{\dt^*}\left(
\td + \varepsilon
+\left(\frac{\varepsilon}{\td}+\eps+e^{-\frac{M\dt^*}{\varepsilon}} \right)|d^n| 
\right)}\;\; ,
\end{align*}
where $\left|d^n\right| := {\rm max}_{0\le i\le n-1}|d^{i,0}|$.
\end{theorem}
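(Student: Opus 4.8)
The plan is to set up a one-step recursion for the discretization error $|E_d^n| = |y^n - Y(t^n)|$ and close it with a discrete Gronwall argument, following the standard proof of Euler convergence but with the local truncation error replaced by the estimates of Lemmas~\ref{lemma_dnm}--\ref{lemma_Gtwiddle}. First I would subtract the macrostep identity of Lemma~\ref{lemma_Gtwiddle}, namely $Y(t^{n+1}) = Y(t^n) + \td\,\tilde{G}(Y(t^n)) + r^n$ with $|r^n| \le 2C^*\td^2$, from the PI macrostep $y^{n+1} = y^n + \td\,\tilde{g}(x^n,y^n)$ in \eqref{e.PIHMMy}, obtaining
\begin{align*}
y^{n+1} - Y(t^{n+1}) = \big(y^n - Y(t^n)\big) + \td\big(\tilde{g}(x^n,y^n) - \tilde{G}(Y(t^n))\big) + r^n\;.
\end{align*}

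Next I would split the vector-field difference as $\tilde{g}(x^n,y^n) - \tilde{G}(Y(t^n)) = \big[\tilde{g}(x^n,y^n)-\tilde{G}(y^n)\big] + \big[\tilde{G}(y^n)-\tilde{G}(Y(t^n))\big]$. The first bracket is controlled directly by Lemma~\ref{g-G} and supplies the inhomogeneous term of the recursion. For the second bracket I need Lipschitz continuity of the auxiliary field $\tilde{G}(\eta) = \sum_m W_m G(\varphi_\eta^m)$: since the Euler flow obeys $|\varphi_{\eta_1}^m - \varphi_{\eta_2}^m| \le (1+L_G\dt)^m|\eta_1-\eta_2|$ and $G$ has Lipschitz constant $L_G$, the weight normalization \eqref{e.Wnormal} together with assumption ({\it A7}) (which gives $(1+L_G\dt)^M \le e^{L_G M\dt} \le 3$) yields $|\tilde{G}(y^n)-\tilde{G}(Y(t^n))| \le 3L_G|E_d^n|$.

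Combining these estimates, taking norms, and using $|d^{i,0}| \le |d^n|$ for all $i\le n-1$, the recursion becomes
\begin{align*}
|E_d^{n+1}| \le (1 + 3L_G\td)|E_d^n| + \td B^n + 2C^*\td^2\;,
\end{align*}
where $B^n$ denotes the right-hand side of Lemma~\ref{g-G} with $|d^{n,0}|$ replaced by $|d^n|$. Iterating from $E_d^0 = 0$ (the two solutions share their initial datum) and invoking the discrete Gronwall lemma gives $|E_d^n| \le e^{3L_G n\td}\big(\sum_{i=0}^{n-1}\td B^i + 2C^* n\td^2\big)$. Using $n\td \le T$, bounding each $B^i$ by its $|d^n|$-version, and collecting the $\eps$-, $\td$- and $|d^n|$-proportional contributions reproduces the stated bound once the prefactors $e^{3L_G T}$, $T$, and the Lipschitz and boundedness constants are absorbed into a single $C$; the regime $2\eps/(\lambda+1)<\dt<2\eps/\lambda$ is handled identically by inserting the second lines of Lemmas~\ref{lemma_dnm}--\ref{g-G}, which carry the $\dt/\dt^*$ factors and replace $e^{-M\dt/\eps}$ by $e^{-M\dt^*/\eps}$.

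The main obstacle is the Lipschitz step for $\tilde{G}$: one must ensure its Lipschitz constant stays uniformly bounded independently of $M$ and $\dt$, which is exactly where assumption ({\it A7}) on $L_G M\dt$ is indispensable, and then carry the $|d^n|$-dependent inhomogeneity through the Gronwall sum so that its coefficient $\eps/\td + \eps + e^{-M\dt/\eps}$ is preserved rather than amplified. This requires checking that $\sum_{i} \td B^i \le T\,B^{\max}$ introduces no further growth in the factor multiplying $|d^n|$.
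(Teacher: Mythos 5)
Your proposal is correct and follows essentially the same route as the paper: the same splitting $\tilde{g}(x^n,y^n)-\tilde{G}(Y(t^n)) = \big[\tilde{g}(x^n,y^n)-\tilde{G}(y^n)\big]+\big[\tilde{G}(y^n)-\tilde{G}(Y(t^n))\big]$, with Lemma~\ref{g-G} supplying the inhomogeneous term, Lemma~\ref{lemma_Gtwiddle} the $\mathcal{O}(\td^2)$ truncation error, and a discrete Gronwall iteration from $E_d^0=0$, with the second regime handled via $\dt^*<\dt$ exactly as in the paper. The only cosmetic difference is that you control the second bracket by a direct Lipschitz estimate on $\tilde{G}$ (constant $3L_G$ via $(1+L_G\dt)^M\le e^{L_GM\dt}\le 3$ from ({\it A7})), whereas the paper invokes the mean value theorem with the integrated Jacobian $\LE^n$ and asserts the bound $L_G$ for its norm --- your version is, if anything, more careful about that constant, which is absorbed into $C$ either way.
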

\medskip
\begin{remark}
The error estimate involves the well known exponential decay of the fast variable towards the approximate centre manifold, leading to a loss of memory of the fast initial condition; PI, however, involves an additional error term proportional to the maximal distance of the fast variable to the approximate centre manifold which involves no memory loss with $M\to \infty$. 
\end{remark}

\begin{proof} 
We estimate the difference $E_d^n=y^n-Y(t^n)$, applying Lemma~\ref{lemma_Gtwiddle},
\begin{align}
E_d^{n}-E_d^{n-1} &= y^{n} - y^{n-1} - \left(Y(t^{n}) - Y(t^{n-1})\right)
\nonumber
\\
&=  \tilde{g}(x^{n-1},y^{n-1})\td -  \tilde{G}\left(Y(t^{n-1})\right)\td + {\mathcal{O}}(\td^2)
\nonumber
\\ \vspace{10pt}
\label{E_n_diff} &=\left(\tilde{G}(y^{n-1}) - \tilde{G}(Y(t^{n-1}))\right)\td + \left(\tilde{g}(x^{n-1},y^{n-1}) - \tilde{G}(y^{n-1})\right)\td +{\mathcal{O}}(\td^2)
\nonumber
\\
&=  \LE^{n-1} E_d^{n-1}\td  +\alpha_{n-1} \td \; .
\end{align}
We used the mean value theorem for vector-valued functions to introduce 
\begin{align}
\LE^n&:=\int_0^1 \! \D \tilde{G}\big(Y(t^n)+\theta(y^n-Y(t^n))\big) \, d\theta \; ,
\nonumber
\end{align}
where $\D \tilde{G}$ is the Jacobian matrix of $\tilde{G}$, and we set
\begin{align*}
\alpha_n &:= \tilde{g}(x^n,y^n) - \tilde{G}(y^n) +{\mathcal{O}}(\td) \; ,
\end{align*}
where according to Lemma~\ref{lemma_Gtwiddle} we have
\begin{align}
\label{e.alphan}
|\alpha_n| \leq |\tilde{g}(x^n,y^n) - \tilde{G}(y^n)| +2C^*\td \; .
\end{align}
Taking absolute values we obtain
\begin{align*}
\left|E_d^n\right| 
&\leq \left(1+\td ||\LE^{n-1}||\right)\left|E_d^{n-1}\right|+ |\alpha_{n-1}| \td
\; .
\end{align*}
Employing assumption $({\it A4})$ on the boundedness of $g$, we define
\begin{align*}
{\hat{\alpha}} = \max_{0\le i \le n-1} | \alpha_i |\; ,
\end{align*}
and using $({\it A5})$ on the boundedness of $G$ it is easy to show that
\begin{align*}
L_G = \max_{0\le i\le n-1}||\LE^i|| \; .
\end{align*}
We obtain, evaluating the geometric series,
\begin{align}
\nonumber \left|E_d^n\right| 
&\leq \left(1+\td L_G\right)^n\left|E_d^{0}\right| + {\hat{\alpha}}\, \td \sum_{m=0}^{n-1} \left(1+\td L_G\right)^m
\\ 
\label{alt.alpha} &\leq \frac{{\hat{\alpha}}}{L_G}e^{n \td L_G}\; ,
\end{align}
where we used that at $n=0$ we initialize with $E^0_d=0$. Recalling the bound (\ref{e.alphan}) for $|\alpha_n|$ and employing Lemma~\ref{g-G} for $0<\dt<\frac{2\eps}{\lambda+1}$ we obtain for the bound of the discretization error
\begin{align} \label{final_eqn_E}
 |E_d^n|\leq \frac{e^{n \td L_G}}{L_G} 
 \Bigg\{  
2C^* \td
 + L_g \left(\frac{\varepsilon}{\td} + 3L_g(1+L_f)\eps + e^{-\frac{M\dt}{\varepsilon}} \right)|d^{n}| + 3 L_g L_f C_g \varepsilon  
\Bigg\} \; .
\end{align}
Employing Lemma~\ref{g-G} for $\frac{2\eps}{\lambda+1} < \dt < \frac{2\eps}{\lambda}$ we analogously obtain 
\begin{align} 
\nonumber |E_d^n|\leq &\frac{e^{n \td L_G}}{L_G} 
 \Bigg\{  
2C^* \td
 + L_g \left(\frac{\dt}{\dt^*}\frac{\varepsilon}{\td} + 3\frac{\dt}{\dt^*}L_g(1+L_f)\eps + e^{-\frac{M\dt^*}{\varepsilon}} \right)|d^{n}|
 \\
\label{final_eqn_E.aux}& \qquad \qquad + 3 \frac{\dt}{\dt^*}L_g L_f C_g \varepsilon
\Bigg\} \; .
\end{align}
Theorem~\ref{theorem.Ed} follows realizing $\dt^\star<\dt$ for $\dt>2\eps/(\lambda+1)$.
\end{proof}

\medskip
\noindent
Besides the parameters used in the numerical scheme, i.e. the macrostep size $\Dt$, the number of microsteps $M$ with microstep size $\dt$, and the time scale parameter $\eps$, the error bound also involves the maximal deviation $|d^n|$ of the fast variables to the approximate centre manifold.

\noindent The bound on $|d^n|$ is established in the following Lemma.

\medskip
\begin{lemma}
\label{lemma5} Assuming  ({\it A1}), ({\it A4}), ({\it A6}) and ({\it A8}), the maximal deviation of the fast variables from the approximate centre manifold over $n$ macrosteps $|d^n| = \max_{0\leq i\le n-1} |d^{n,0}|$ satisfies
\begin{align*}
\left|d^n\right| \leq 
\twoparts
{\left| d^{0,0} \right| + \frac{\eps L_f C_g (1 + \lambda) \Dt }{\eps-\Dt\lambda e^{-\frac{M\dt}{\eps}} }}
{\left|d^{0,0}\right| +  \frac{\eps L_f C_g \left(1+\lambda\frac{\dt}{\dt^*}\right) \Dt}{\eps- \Dt\lambda e^{-\frac{M\dt^*}{\eps}} }} \;\;.
\end{align*}
\end{lemma}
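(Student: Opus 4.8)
The plan is to derive a linear recursion relating the fast-variable deviation $d^{n+1,0}$ at the start of macrostep $n+1$ to the deviation $d^{n,0}$ at the start of macrostep $n$, and then to solve it as a geometric series. First I would compute how the projective macrostep (\ref{PImacro_x})--(\ref{PImacro_y}) acts on the deviation. Writing $x^{n,M}=d^{n,M}+\bar f(y^{n,M})$ with $\bar f=\Lambda^{-1}f$ and substituting into (\ref{PImacro_x}), the contribution $\Lambda^{-1}f(y^{n,M})$ cancels against the $\frac{\Dt}{\eps}f(y^{n,M})$ term, leaving
\begin{align*}
d^{n+1,0} = \left(\I - \frac{\Dt}{\eps}\Lambda\right) d^{n,M} - \left(\bar f(y^{n+1}) - \bar f(y^{n,M})\right) \; ,
\end{align*}
since $x^{n+1}=x^{n+1,0}$ and $y^{n+1}=y^{n+1,0}$.

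Second, I would take norms of both pieces. The matrix $\I - \frac{\Dt}{\eps}\Lambda$ is diagonal with entries $1-\frac{\Dt}{\eps}\lambda_{ii}$; using $\max\lambda_{ii}=\lambda$ together with assumption ({\it A6}), which forces $\frac{\Dt\lambda}{\eps}>2$, one checks the elementary bound $\|\I - \frac{\Dt}{\eps}\Lambda\|\le \frac{\Dt\lambda}{\eps}$. For the second term, (\ref{PImacro_y}) gives $y^{n+1}-y^{n,M}=\Dt\,g(x^{n,M},y^{n,M})$, and since $\bar f=\Lambda^{-1}f$ is Lipschitz with constant at most $L_f$ (as $\|\Lambda^{-1}\|=1$), assumptions ({\it A1}) and ({\it A4}) yield $|\bar f(y^{n+1})-\bar f(y^{n,M})|\le L_f C_g\Dt$. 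Inserting the microstep-decay estimate of Lemma~\ref{lemma_dnm} on $|d^{n,M}|$, with $(1-\dt/\eps)^M\le e^{-M\dt/\eps}$, produces the linear recursion
\begin{align*}
|d^{n+1,0}| \le q\,|d^{n,0}| + b \; , \qquad q = \frac{\Dt\lambda}{\eps}\,e^{-\frac{M\dt}{\eps}} \; , \qquad b = L_f C_g\,(1+\lambda)\,\Dt \; ,
\end{align*}
and, in the range $\frac{2\eps}{\lambda+1}<\dt<\frac{2\eps}{\lambda}$, the analogous recursion with $e^{-M\dt^*/\eps}$ in place of $e^{-M\dt/\eps}$ and $b=L_f C_g(1+\lambda\frac{\dt}{\dt^*})\Dt$.

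Third, I would solve the recursion. The decisive point is that $q<1$, which is precisely what assumption ({\it A8}) supplies: the inequality $\Dt\,e^{-M\dt/\eps}<\eps/\lambda$ is equivalent both to $q<1$ and to positivity of the denominator $\eps-\Dt\lambda e^{-M\dt/\eps}$. Given $q<1$, iterating and summing the geometric series gives $|d^{i,0}|\le q^i|d^{0,0}|+\frac{b}{1-q}(1-q^i)\le |d^{0,0}|+\frac{b}{1-q}$ for every $i$, so the same bound holds for $|d^n|=\max_{0\le i\le n-1}|d^{i,0}|$. Substituting $1-q=(\eps-\Dt\lambda e^{-M\dt/\eps})/\eps$ together with $b=L_f C_g(1+\lambda)\Dt$ turns $\frac{b}{1-q}$ into exactly $\frac{\eps L_f C_g(1+\lambda)\Dt}{\eps-\Dt\lambda e^{-M\dt/\eps}}$, matching the claimed bound, with the $\dt^*$-case following identically. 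The main obstacle is this last step: because the macrostep is projective, $\|\I-\frac{\Dt}{\eps}\Lambda\|>1$, so the recursion is a priori expansive, and the whole content of the lemma is that the exponential contraction $e^{-M\dt/\eps}$ accumulated over the $M$ microsteps—controlled exactly by ({\it A8})—dominates this expansion and makes $q<1$. Once the contraction is secured, the remaining algebra is routine.
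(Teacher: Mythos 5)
Your proposal is correct and essentially reproduces the paper's own argument: you derive the same one-macrostep recursion $d^{i,0}=\left(\I-\frac{\Dt}{\eps}\Lambda\right)d^{i-1,M}-\bigl(\bar f(y^{i})-\bar f(y^{i-1,M})\bigr)$, bound $\left|\I-\frac{\Dt}{\eps}\Lambda\right|\le\Dt\lambda/\eps$ via ({\it A6}), contract $|d^{i-1,M}|$ back to $|d^{i-1,0}|$ with Lemma~\ref{lemma_dnm} (whence the factors $1+\lambda$ and $1+\lambda\dt/\dt^*$ in $b$), and sum the resulting geometric series with ({\it A8}) guaranteeing $q=\frac{\Dt\lambda}{\eps}e^{-M\dt/\eps}<1$. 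The constants and final bounds match the paper's \eqref{eq_dt_rec}--\eqref{eq_dtstar_rec} exactly.
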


\begin{proof}
We bound for $0\le i\le n$
\begin{align*}
| d^{i,0}| &= |x^i - \bar{f}(y^i)|
 \\
 &= \left|x^{i-1,M} - \frac{ \Dt}{ \varepsilon}\left(\Lambda \, x^{i-1,M} - {f}(y^{i-1,M}) \right) - \bar{f}(y^i) \right| 
 \\
 &\leq \Dt \frac{\lambda}{\eps} \left|d^{i-1,M} \right| + \left| \bar{f}(y^{i-1,M}) -\bar{f}(y^i)\right| \; ,
\end{align*}
where we have used assumption ({\it A6}) to simplify
\[
\left|\Dt \frac{\Lambda}{\eps} - \I  \right| = \Dt \frac{\lambda}{\eps} - 1 < \Dt \frac{\lambda}{\eps} \; .
\] 
Employing Lemma~\ref{lemma_dnm} for $0<\dt<\frac{2\eps}{\lambda+1}$, and assumptions ({\it A1}) and ({\it A4}), we obtain
\begin{align}
\label{dn.alt} | d^{i,0}| 
&\leq \Dt \frac{\lambda}{\eps} \left|d^{i-1,M} \right| + L_f\left| y^{i-1,M} -y^i \right| 
\\
\nonumber
&\leq \Dt \frac{\lambda}{\eps}e^{-\frac{M\dt}{\eps}} \left|d^{i-1,0} \right| + L_f C_g (1+\lambda) \Dt \; .
 \end{align}
 Evaluating this recursive relationship yields
 \begin{align}
 | d^{i,0}|  
 \label{eq_dt_rec}
 &\leq \left[ \Dt \frac{\lambda}{\eps}e^{-\frac{M\dt}{\eps}} \right]^i \left|d^{0,0}\right| + L_f C_g (1+\lambda) \Dt \frac{1-\left[ \Dt \frac{\lambda}{\eps}e^{-\frac{M\dt}{\eps}} \right]^i}{1- \Dt \frac{\lambda}{\eps}e^{-\frac{M\dt}{\eps}} }
 \\
 \nonumber
  &\leq \left|d^{0,0}\right| +  \frac{L_f C_g (1 + \lambda) \Dt}{1- \Dt \frac{\lambda}{\eps}e^{-\frac{M\dt}{\eps}} } \; ,
\end{align}
where we used assumption ({\it A8}) with $0<\dt<\frac{2\eps}{\lambda+1}$. Using Lemma~\ref{lemma_dnm} for $\frac{2\eps}{\lambda+1} < \dt < \frac{2\eps}{\lambda}$ in \eqref{dn.alt} we obtain
\begin{align}
 | d^{i,0}| 
 \nonumber
 &\leq \Dt \frac{\lambda}{\eps}e^{-\frac{M\dt^*}{\eps}} \left|d^{i-1,0} \right| + L_f C_g \left(1+\lambda\frac{\dt}{\dt^*}\right) \Dt 
 \\
 \label{eq_dtstar_rec}
 &\leq \left[ \Dt \frac{\lambda}{\eps}e^{-\frac{M\dt^\star}{\eps}} \right]^i \left|d^{0,0}\right| + L_f C_g (1+\lambda\frac{\dt}{\dt^\star}) \Dt \frac{1-\left[ \Dt \frac{\lambda}{\eps}e^{-\frac{M\dt^\star}{\eps}} \right]^i}{1- \Dt \frac{\lambda}{\eps}e^{-\frac{M\dt^\star}{\eps}} }
 \\
&
\nonumber
\leq \left|d^{0,0}\right| +  \frac{\eps L_f C_g \left(1+\lambda\frac{\dt}{\dt^*}\right) \Dt}{\eps- \Dt \lambda e^{-\frac{M\dt^*}{\eps}} } \;, 
\end{align}
where we used assumption ({\it A8}) with $\frac{2\eps}{\lambda+1} < \dt < \frac{2\eps}{\lambda}$.
\end{proof} 
\medskip
\begin{remark}
In the limits of Assumption ({\it A8}), $\Dt  \, \exp\left(-\frac{M\dt}{\eps}\right) \to  \frac{\eps}{\lambda}$ and $\Dt  \, \exp\left(-\frac{M\dt^\star}{\eps}\right) \to \frac{\eps}{\lambda}$, the terms $1-\left[ \Dt \frac{\lambda}{\eps}e^{-\frac{M\dt}{\eps}} \right]^i$ in \eqref{eq_dt_rec} and $1-\left[ \Dt \frac{\lambda}{\eps}e^{-\frac{M\dt^\star}{\eps}} \right]^i$ in \eqref{eq_dtstar_rec}, which are neglected in Lemma~\ref{lemma5}, are crucial to obtain finite estimates $|d^{n}|\le \left|d^{0,0}\right| +  n\Dt \,L_f C_g \left(1+\lambda \right)$ and $|d^{n}|\le \left|d^{0,0}\right| +  n\Dt \,L_f C_g \left(1+\frac{\dt}{\dt^*}\lambda\right)$, respectively.
\end{remark}

\medskip 
Theorem~\ref{theorem.main} follows by combining Theorems~\ref{theorem.Ec}~and~\ref{theorem.Ed} with Lemma~\ref{lemma5}, realizing $\dt^\star<\dt$ for $\dt>2\eps/(\lambda+1)$.


\section{Numerical confirmation of the error bound $|E_d^n|$}
\label{sec:numerics}

We now illustrate the error bound $|E_d^n|$ of Theorem~\ref{theorem.Ed}, \eqref{final_eqn_E}, which we recall here including the constants as obtained in the proof
\begin{align}
|E_d^n|\leq \,3\frac{e^{n \td L_G}}{L_G} 
 \Bigg\{  
 C^* t_\Delta 
 + {L_g}\left(\frac{\varepsilon}{\td} + L_g (1+L_f)\eps + e^{-\frac{M\dt}{\varepsilon}} \right)|d^{n}|
+ L_g L_f C_g \varepsilon  
\Bigg\} \; ,
\label{e.Edn}
\end{align}
for $0<\dt<\frac{2\eps}{\lambda+1}$, and 
\begin{align} 
\nonumber |E_d^n|\leq \,3\frac{e^{n \td L_G}}{L_G} \frac{\dt}{\dt^*}
 \Bigg\{  
C^* \td
 + L_g \left(\frac{\varepsilon}{\td} + L_g(1+L_f)\eps + e^{-\frac{M\dt^*}{\varepsilon}} \right)|d^{n}|
 + L_g L_f C_g\eps
\Bigg\} \; ,\\\label{e.Edn.aux}
\end{align}
for $\frac{2\eps}{\lambda+1}<\dt<\frac{2\eps}{\lambda}$.\\
We demonstrate the scaling of $|E^n_d|$ with respect to the macrostep size $\Dt$, the timescale parameter $\eps$ and the reinitialization error of the fast variable $|d^n|$ for fixed final time $T=n\td$. Note that the dependencies of $\eps$ and $\Dt$ are complicated through the dependency of $|d^n|$ on those parameters. We show results for simulations using the following multiscale system
\begin{align}
\label{toy_y}\dot{y_\eps} &= -x_\eps y_\eps -ay_\eps^2 \\
\label{toy_x}\dot{x_\eps} &= \frac{-x_\eps+\sin^2(by_\eps)}{\eps} \; ,
\end{align}
which has, at lowest order in $\eps$, the slow limit system
\begin{align}
\label{toy_Y} \dot{Y} &= -Y\sin^2(bY) - aY^2 \; .
\end{align}
For higher order approximations of the centre manifold and the associated coordinate transformations relating $y$ and $Y$ the reader is referred to the very useful webpage \cite{RobertsWeb} (see also \cite{Roberts08}).\\ The system (\ref{toy_y})-(\ref{toy_x}) with initial condition $y_\eps(0) > 0$ is locally Lipschitz with Lipschitz constant $L_f \leq b$ and $L_g={\rm{max}}(|x_\eps|+2a|y_\eps|)$ where the maximum is taken over the local region around the fixed point at $(x_\eps,y_\eps)=(0,0)$ under consideration. The vector field of the slow dynamics (\ref{toy_y}) is locally bounded by $C_g = \max( |x_\eps y_\eps|+a|y_\eps|^2)$, with the maximum taken over the same region. The free parameters $a$ and $b$ are used to control the Lipschitz constants $L_f$ and $L_g$. Here $\lambda=1$, implying $\frac{2\eps}{\lambda+1} = \eps$. Therefore \eqref{e.Edn} holds for $0<\dt<\eps$ and \eqref{e.Edn.aux} holds for $\eps<\dt<2\eps$.\\

\noindent
We first investigate how $|E_d^n|$ scales with the macrostep $\Dt$. We ensure that the term proportional to $C^* \td$ is the dominant term in (\ref{e.Edn}) by initializing the fast variables on the approximate slow manifold with $|d^{0,0}| = 0$. 
Figure \ref{Dt_scaling} illustrates the linear scaling of $|E_d^n|$ with $\Dt$. System parameters are $a=1$, $b=0.1$. The scale separation parameter is $\eps=10^{-5}$. We used $M=90$ microsteps with microstep size $\dt = 0.1\eps$ and $\dt=1.6\eps$. The number of iterations $n$ varied from $48$ to $918$ to keep $T=n\td=1$ fixed for all values of $\Dt$. Initial conditions are chosen to lie on the approximate slow centre manifold with $y^0 = 1$, $x^0 = \sin^2(0.1)$. The Lipschitz constants are $L_g=2$ and $L_f = 0.2$, the bound on the vector field of the slow dynamics is $C_g=1$, and the maximal second derivative of the reduced slow dynamics is $C^*=2$.\\

\begin{figure}
\includegraphics[width=\textwidth]{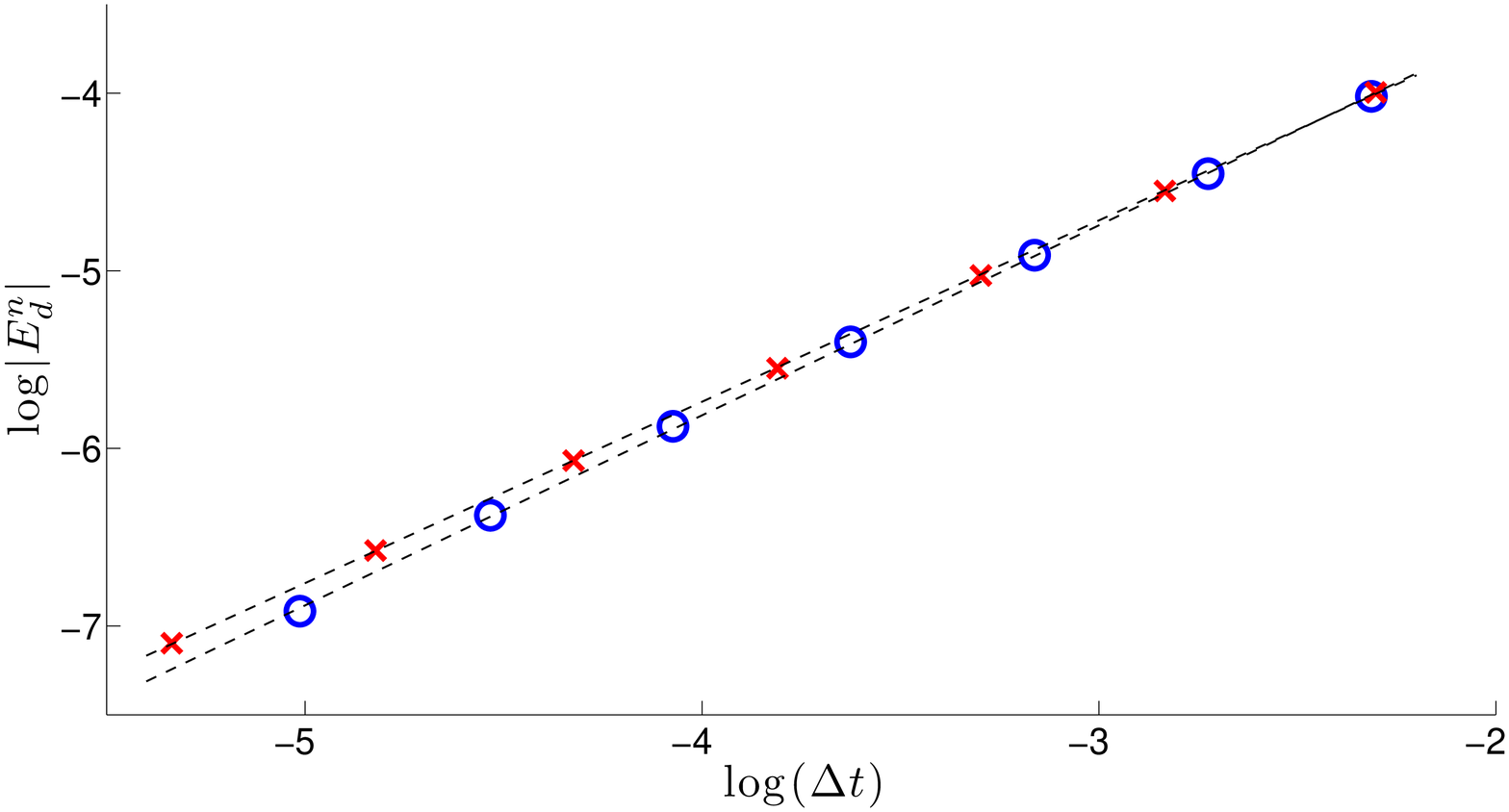}
\caption{Plot of $\log |E_d^n|$ versus $\log \Dt$ for fixed time of integration $T=1$. The dots represent results from numerical PI simulations of the system \eqref{toy_y}-\eqref{toy_x}, with the crosses representing results from a system with $\dt=0.1\eps$ and the circles representing a system with $\dt=1.6\eps$. The dashed lines are linear regression lines with a slopes of $1.02$ and $1.07$, respectively.} 
\label{Dt_scaling}
\end{figure}

\noindent
We now present results for the scaling of $|E_d^n|$ with the time scale parameter $\eps$. To focus on the linear scaling suggested by the term $L_g L_f C_g \eps$ in (\ref{e.Edn}), we will have to control the term proportional to $|d^n|$ and the $C^*\td$ term. The $|d^n|$-term is controlled by setting initial conditions on the centre manifold and employing $M\gg 1$, allowing for relaxation to the centre manifold. The $C^*\td$ term we control by choosing $\Dt<\eps$, violating assumption ({A6}) (note that this implies $\dt<\eps$). System parameters are $a=0.1$, $b=1$. Initial conditions are $y^0 = 5$, $x^0 = \sin^2(5)$, i.e. $|d^{0,0}|=0$. We used $M=100$ microsteps with microstep size $\dt = 10^{-6}$, macrostep size $\Dt=10^{-4}$, for $n=50$ iterations of PI with $T=0.01$. The Lipschitz constant for the slow dynamics is $L_g=2$ and for the centre manifold is $L_f = 1$, the bound on the vector field of the slow dynamics is $C_g=7$, and the maximal second derivative of the reduced slow dynamics is $C^*=6$. Figure \ref{new_eps} illustrates the linear dependence of $|E_d^n|$ on $\eps$ in this situation.\\

\begin{figure}[h!]
\includegraphics[width=\textwidth]{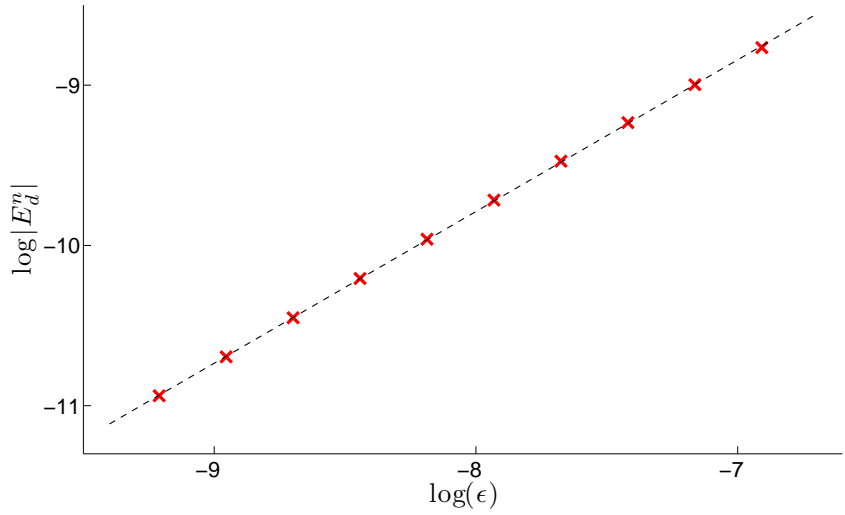}
\caption{
Plot of $\log |E_d^n|$ versus $\log \eps$. The dots represent results from numerical PI simulations of the system \eqref{toy_y}-\eqref{toy_x}. The line is a linear regression with a slope of $0.947$. 
}
\label{new_eps}
\end{figure}

\noindent
We now illustrate the linear scaling of $|E_d^n|$ with the maximal distance $|d^n|$ of the fast variable from the approximate centre manifold after a macrostep. To ensure that the error is not dominated by the initial initialization error $|d^{0,0}|$, we choose parameters which render the scheme unstable and violate assumption ({\it A8}), allowing for divergence of the fast variables from the centre manifold over the macrosteps, i.e. $|d^{n,0}| > |d^{0,0}|$. Figure \ref{dn_scaling} shows clearly the linear dependence of $|E_d^n|$ on $|d^n|$. System parameters are $a=1$, $b=1$. Initial conditions are $y^0 = 1$, $x^0 \in [\sin^2(1)+0.01, \sin^2(1)+0.5]$. The scale separation is $\eps=10^{-4}$. We used $M=100$ microsteps with microstep size $\dt = 0.01\eps$ and $\dt=1.99\eps$, and $n=5$ macrosteps with macrostep size $\Dt=10^{-3}$ implying $T=0.0055$ or $T=0.1$. The Lipschitz constants are $L_g=1$ and $L_f = 1$, the bound on the vector field of the slow dynamics is $C_g=290$, and the maximal second derivative of the reduced slow dynamics is $C^*=6$. 
\\ 

\begin{figure}[h!]
\includegraphics[width=\textwidth]{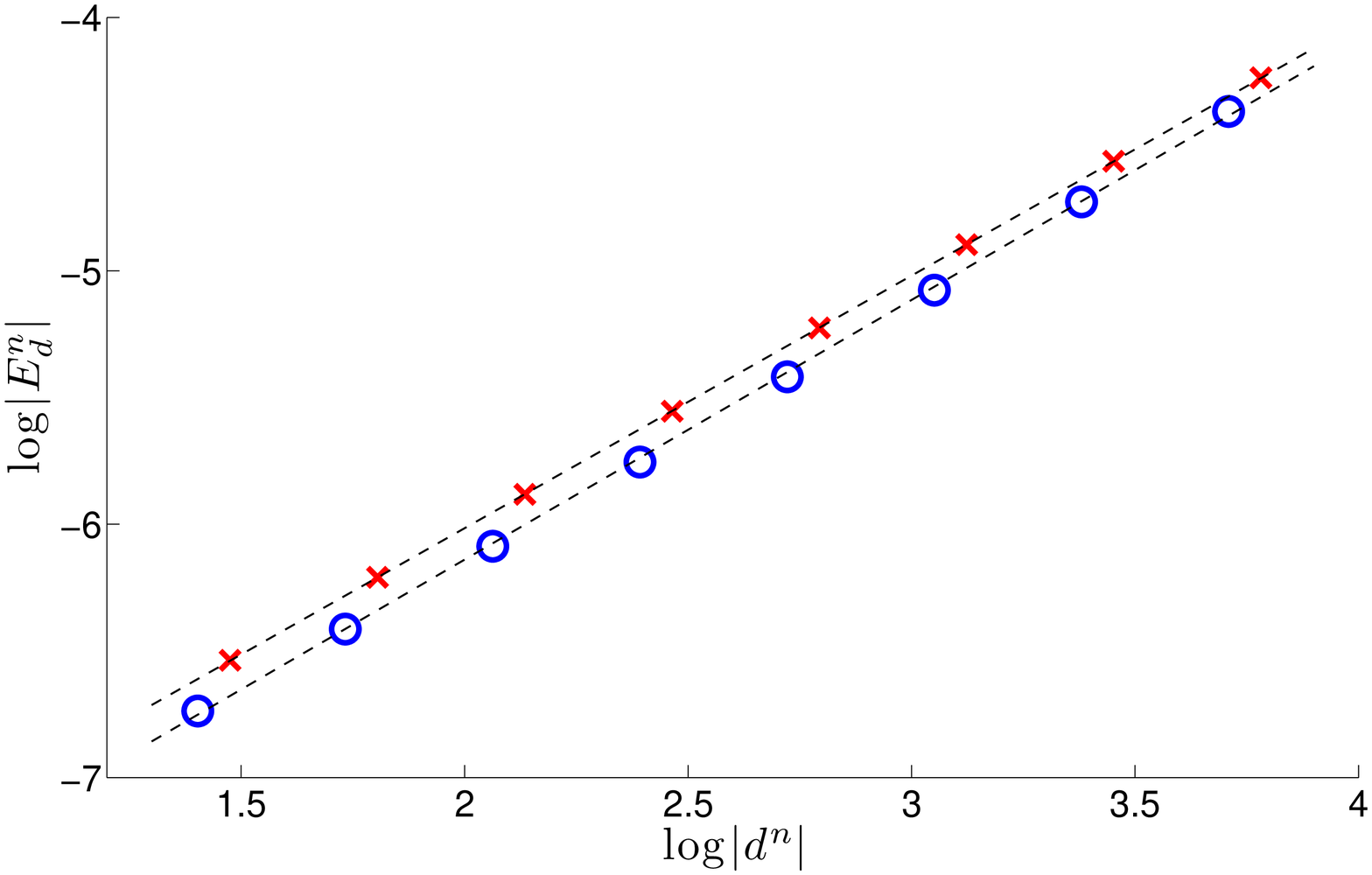}
\caption{
Plot of $\log |E_d^n|$ versus $\log |d^n|$. The dots represent results from numerical PI simulations of the system \eqref{toy_y}-\eqref{toy_x}. The crosses are results from simulations with $\dt=0.01\eps$ and the circles are results from simulations with $\dt=1.99\eps$. The dashed lines are linear regression lines with a slope of $1.00$ and $1.03$, respectively.}
\label{dn_scaling}
\end{figure}


\section{Discussion}
\label{sec:summary}
We have established bounds on the error of a numerical approximation of the solution of a multiscale system by Projective Integration. The error contains terms stemming from the inherent error made by reducing the full dynamics on the centre manifold as well as errors specific to the numerical discretization. In particular, the order of the numerical scheme features, as well as errors due to inaccurately approximating the dynamics on the centre manifold. Although the constants involved in our error estimates are not optimal, the numerical simulations suggest that the scaling obtained is correct.\\ 
In future work it is planned to use the analytical results obtained here as well as a trivial extension of our results to seamless HMM and the results obtained for the non-seamless version of HMM (see \cite{E03}) to shed light on the important question in what circumstances one method or the other may lead to better performance. These methods exhibit different error bounds due to their different weights as well as due to differing reinitialization procedures for the fast variables.


\section*{Acknowledgments}

We are grateful for discussions with Daniel Daners and Tony Roberts. GAG acknowledges support from the Australian Research Council. John Maclean is supported by a University of Sydney Postgraduate Award.

\bibliographystyle{natbib}
\bibliography{bibliography}
\end{document}